  \newtheorem{thm}{Theorem}
\newtheorem{lemma}[thm]{Lemma}
\newtheorem{rmk}[thm]{Remark}
\newtheorem{cor}[thm]{Corollary}
\newtheorem{prop}[thm]{Proposition}
\numberwithin{thm}{section}
\newcommand{\gam}{\Gamma}
\newcommand{\Z}{\mathbb{Z}}
\newcommand{\N}{\mathbb{N}}
\newcommand{\Q}{\mathbb{Q}}
\DeclareMathOperator{\Homeo}{Homeo}
\DeclareMathOperator{\ro}{RO}
\DeclareMathOperator{\supp}{supp }
\DeclareMathOperator{\suppe}{supp^e}
\newcommand{\GG}{\mathcal{G}}
\title{Generic torsion-free groups and Rubin actions}
\author{Thomas Koberda and Yash Lodha}
\date{\today}
\begin{document}

\begin{abstract}
    We use model theoretic forcing to prove that a generic 
    countable torsion-free group
    does not admit any non-trivial locally moving action on a
    Hausdorff topological space, and yet admits a rich Rubin poset. 
\end{abstract}

\maketitle

\section{Introduction}

In this article, we investigate the question of whether a generic countable
torsion-free group admits
sufficiently rich actions on topological spaces. We are motivated primarily by
the problem of deciding whether or not there exists a countable torsion-free
group which admits no non-trivial action on a compact manifold;
for instance, countable groups acting faithfully
on connected compact $1$--manifolds are
characterized by linear or circular orderability. For manifolds of
dimension two or higher, there are no known obstructions for
a torsion-free countable group to act faithfully by homeomorphisms, though
it seems likely that there are countable torsion-free groups which admit
no non-trivial action by homeomorphisms on any compact manifold.

Let $X$ be a Hausdorff topological space, and let $G\leq\Homeo(X)$ be a subgroup.
For $U\subseteq X$ open, we write $G_U$ for the \emph{rigid stabilizer} of $U$,
which is to say the subgroup of $G$ consisting of elements which restrict to the
identity outside of $U$. We say that the action of $G$ is: 
\begin{enumerate}
\item \emph{Locally dense} if for all
$U\subseteq X$ open and for all $p\in U$, we have that the closure of the orbit
$G_U\cdot p$ has nonempty interior. 
\item \emph{Locally moving} if for all nonempty $U\subseteq X$, we have $G_U$ is
non-trivial. 
\end{enumerate}
It is generally the case (though not always, like in the case of a manifold with
boundary)
that requiring an action to be locally moving is weaker than
requiring it to be locally dense.
Locally dense actions of groups find their
importance through the following fundamental result of 
Rubin~\cite{Rubin1989,Rubin1996,rubin-short,KK2021book}:
\begin{thm}\label{thm:rubin}
    Let $X$ and $Y$ be locally compact and Hausdorff topological spaces with no isolated points, and let $G$ be a group
    acting faithfully and locally densely on both $X$ and $Y$. Then there is a $G$-equivariant
    homeomorphism $X\longrightarrow Y$.
\end{thm}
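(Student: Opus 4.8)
The plan is to show that the $G$-space $X$ is completely encoded in the abstract group $G$ — concretely, in the way the rigid stabilizers sit inside the subgroup lattice $\mathrm{Sub}(G)$ — by a recipe that never refers to $X$ itself. Since $G$ also acts faithfully and locally densely on $Y$, the same recipe applied to the $Y$-action reconstructs the topology of $Y$, and comparing the two reconstructions yields a $G$-equivariant homeomorphism $X\to Y$.

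First I would set up the dictionary between open sets and rigid stabilizers. For open $U$ one has $G_U=\{g\in G:\ gx=x\ \text{for all}\ x\notin U\}$, and $U\mapsto G_U$ is a $G$-equivariant order-embedding of the frame $\mathcal O(X)$ of open sets into $(\mathrm{Sub}(G),\subseteq)$, with $G$ acting by conjugation ($gG_Ug^{-1}=G_{gU}$): it clearly preserves inclusions and finite intersections, $G_{U\cap V}=G_U\cap G_V$, and the real point is that it reflects order. For this I would prove $\bigcup_{g\in G_U}\{x:gx\ne x\}=U$ for every open $U$: the inclusion ``$\subseteq$'' is the definition of $G_U$, and for ``$\supseteq$'', given $p\in U$ pick (local compactness) an open $W$ with $p\in W\subseteq\overline W\subseteq U$; local density makes $\overline{G_W\cdot p}$ have nonempty interior, so by the absence of isolated points $G_W\cdot p\ne\{p\}$, hence some $g\in G_W\le G_U$ moves $p$. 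Consequently $G_U\subseteq G_V$ forces $U=\bigcup_{g\in G_U}\{gx\ne x\}\subseteq\bigcup_{g\in G_V}\{gx\ne x\}=V$, so the embedding is faithful and $G_\emptyset=1$.

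The technical heart is to characterize the image $\{G_U:U\in\mathcal O(X)\}$ — or a subfamily that determines it — purely inside $\mathrm{Sub}(G)$, and this is exactly where \emph{local density}, as opposed to mere local moving, is indispensable. The idea is to detect ``point-like'' pieces. Using local density one shows that a suitable condition on a pair $g,h\in G$, built from commutators of conjugates, holds precisely when the regular interiors of $\supp g$ and $\supp h$ are disjoint; this makes the regular support $\rsupp(g)$ and its rigid stabilizer recognizable from $g$ alone, and more generally lets one read off the relation $\rsupp(g)\subseteq\rsupp(h)$ group-theoretically. From here one identifies the rigid stabilizers of basic (say, relatively compact regular open) sets among all subgroups, and shows that each $G_U$ is pinned down by the basic pieces it contains, so that the whole configuration $\{G_U\}$ — and thus the frame $\mathcal O(X)$ together with its $G$-action — is definable from $G$ alone. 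This is essentially Rubin's reconstruction; an equivalent route first extracts the complete Boolean algebra $\ro(X)$ from these subgroups and then locates $X$ inside its Stone space.

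Finally I would conclude. The recipe of the previous paragraph, being phrased in terms of the abstract group $G$ only, applies verbatim to the $Y$-action and hence delivers a $G$-equivariant isomorphism between the posets of open sets, i.e.\ a $G$-equivariant frame isomorphism $\mathcal O(X)\cong\mathcal O(Y)$. Since locally compact Hausdorff spaces are sober, each of $X$ and $Y$ is recovered from its frame of open sets — as the space of completely prime filters, equivalently the points with their neighbourhood filters — functorially and compatibly with the $G$-action; applying this to the frame isomorphism produces the desired $G$-equivariant homeomorphism $X\to Y$. The main obstacle, by a wide margin, is the intrinsic group-theoretic characterization in the third paragraph — the dictionary of the second paragraph and the sobriety argument here are routine point-set topology — and it is precisely there that faithfulness and, crucially, local density of the action are used in an essential way.
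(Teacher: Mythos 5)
This theorem is not proved in the paper at all: it is quoted as background from Rubin's work (and the Kim--Koberda book), so your proposal can only be measured against the standard proof. Your second paragraph (the dictionary $U\mapsto G_U$, with order-reflection deduced from local compactness, absence of isolated points and local density via $U=\bigcup_{g\in G_U}\supp g$) is correct, and it is indeed the routine part. The gap is in the step you yourself flag as the technical heart: you assert that ``a suitable condition on a pair $g,h$, built from commutators of conjugates, holds precisely when the regular interiors of $\supp g$ and $\supp h$ are disjoint,'' and that from this one can read off inclusions of regular supports and recognize the rigid stabilizers of basic regular open sets inside $\mathrm{Sub}(G)$. That assertion \emph{is} Rubin's expressibility machinery (algebraic disjointness, the role of twelfth powers as in Proposition~\ref{prop:disjoint}, and the interpretation of a dense subalgebra of $\ro(X)$ in the group); nothing in your text establishes any of it, so the central lemma of the theorem is named rather than proved.

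There is a second, independent problem in your concluding paragraph. What the algebraic reconstruction yields is (a dense sub-poset of) the Boolean algebra $\ro(X)$ of regular open sets with its $G$-action --- the poset $\mathcal{P}(G)$ of the paper --- not the frame $\mathcal{O}(X)$ of all open sets. So you are not entitled to a $G$-equivariant frame isomorphism $\mathcal{O}(X)\cong\mathcal{O}(Y)$, and the appeal to sobriety does not close the argument: as a frame, the complete Boolean algebra $\ro(X)$ corresponds to the extremally disconnected Stone space of $\ro(X)$, not to $X$, and non-homeomorphic spaces can have isomorphic regular open algebras. Recovering the points of $X$ (their neighbourhood filters) from $\ro(X)$ together with the $G$-action is the second substantial half of Rubin's proof, and it is exactly where local compactness, the absence of isolated points and local density are used again to single out the point filters among all filters of regular open sets; your parenthetical remark that one can ``locate $X$ inside its Stone space'' names this route but supplies no argument. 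So the outline is aimed in the right direction, but both load-bearing steps --- the group-theoretic interpretation of the support poset and the recovery of the space from it --- are missing.
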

A locally dense faithful action of a group on a locally compact
Hausdorff space $X$ with no isolated points will be called a
\emph{Rubin action}.
We will say that a group $G$ is a \emph{Rubin group} if $G$ admits a Rubin action, and a \emph{weakly Rubin group} 
if it admits a homomorphism to $\Homeo(X)$ for some Hausdorff space
$X$ with at least two points, whose image is a locally moving group of homeomorphisms. 

Let $G\leq\Homeo(X)$ be a Rubin group. The
driving force behind Rubin's Theorem (Theorem~\ref{thm:rubin} above) is that from the
local density condition, 
one can recover a substantial amount of the topology of $X$. Specifically, one
can recover a dense subset of the Boolean algebra
$\ro(X)$ of \emph{regular open sets} of $X$; here, a subset $D$ of $X$ is
dense if for every nonempty $U\in X$, there is a nonempty $V\in D$ such
that $V\subseteq U$.
This fact has been used to
investigate the model theory of homeomorphism groups of manifolds;
see~\cite{KKdlN22,KdlN2023,KdlN2024}.
Here,
we say an open set is \emph{regular} if it is equal to the interior of its closure.
Regular open sets in $X$ arise from supports of homeomorphisms of $X$.

A homeomorphism $g$ of $X$ has an \emph{open support} $\supp g$, which consists
of the points in $X$ which are not fixed by $g$. 
We define the \emph{extended support} of $g$, or $\suppe g$ as the interior of the closure of $\supp g$; this is the smallest regular
open set containing $\supp g$. 
We say that $g\in G$ is
\emph{algebraically disjoint} from $f\in G$ if for all $h\in G$ such that $[f,h]\neq 1$,
there are elements $a,b\in C_{G}(g)$ such that \[1\neq [a,[b,h]]\in C_{G}(g).\]
Here, $C_{G}(g)$ is the centralizer of $g$ in $G$; note
that algebraic disjointness makes sense in an arbitrary group and not
just in a homeomorphism group. Also, note that if $g$ is algebraically disjoint from $f$,
then $[g,f]=1$, for indeed choosing $h=g$ provides a contradiction.
It is a trivial though useful observation that in an abelian group, two
elements are algebraically disjoint if and only if they commute, since
the quantification over $h$ not commuting with $f$ is vacuous.

Algebraic disjointness is not necessarily
a symmetric relation; for example, if $G$ is the symmetric group on $\{1,2,3,4\}$,
the permutation $(1\:2)$ is algebraically disjoint from $(1\: 2)(3\: 4)$, but not
vice-versa.

In the proof of Rubin's theorem, a partially ordered set is constructed purely from the algebraic structure of the group,
and this poset is shown to be isomorphic to inclusion ordered poset of finite intersections $$\{\suppe(g_1)\cap \cdots \cap \suppe(g_n)\mid g_1,\ldots,g_n\in G, n\in \N,n\geq 1\}$$
We recall the algebraic construction.
For $f\in G$, let $$S_f=\{g^{12}\mid g\in G\text{ algebraically disjoint from }f\}$$
and let $C_G(S_f)$ be the centralizer of $S_f$ in $G$.
The group $C_G(S_f)$ is non-trivial since it follows from the
definition of algebraic disjointness that $f\in C_G(S_f)$.
Also, note that it is possible that $C_G(S_f)=C_G(S_{f'})$ for distinct elements $f,f'\in G$.
Let $\mathcal{P}(G)$ be the poset consisting of elements $$\{C_G(S_{f_1})\cap \cdots \cap C_G(S_{f_n})\mid f_1,\ldots, f_n\in G, \,1\leq n\in \N\},$$ and partially ordered by inclusion. In the presence of a Rubin action on a space $X$, the poset $\mathcal{P}(G)$ is naturally isomorphic to the 
inclusion ordered poset of extended supports of elements of $G$ in $X$
~\cite{Rubin1996,rubin-short,KK2021book}.
Indeed, the map $C_G(S_f)\mapsto \suppe(f)$ defined for all
$f\in G$ extends to such an isomorphism.

The main result of this paper is that ``most" countable groups are not Rubin groups (or even weakly Rubin groups), yet admit a rich Rubin poset.
Here,
countable groups are organized into a Polish space of marked countable groups,
which can then be investigated from a descriptive set theory point of view; see
~\cite{GKL2023}. In
particular, the notions of meagerness and comeagerness make sense for the space
of marked countable groups, and a property of groups is called \emph{generic} if it
holds for groups in a comeager subset of the space of marked countable groups.
Properties of generic groups
are studied by certain (infinite length) two-player games, and the class of groups obtained
at the ``end" of the game is called the class of \emph{compiled groups};
we will give a precise definitions in Section~\ref{sec:background}.
The main result of this article is:

\begin{thm}\label{thm:main}
Let $G$ be a generic torsion-free countable group.
\begin{enumerate}
    \item $G$ does not admit a Rubin group or a weakly Rubin group as a quotient.
    \item The Rubin poset $\mathcal{P}(G)$ coincides with the poset of cyclic 
    subgroups of $G$, ordered by inclusion, and
    contains both
bi-infinite chains and infinite antichains.
\item Two non-trivial elements of $G$ commute if and only if they are algebraically
disjoint in $G$.
\end{enumerate}
\end{thm}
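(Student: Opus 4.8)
The plan is to proceed by model-theoretic (finite) forcing over the theory of torsion-free groups, equivalently by Baire category in the Polish space $\mathcal{X}$ of marked countable torsion-free groups. Each of the three assertions describes a comeager subset of $\mathcal{X}$, so it suffices, for each of countably many \emph{requirements}, to produce a dense family of conditions forcing it; a condition may be taken to be a finitely generated torsion-free group together with finitely many prescribed non-relations. The uniform tool is that any condition $H$ can be enlarged by free products, amalgamated products, and HNN extensions over finitely generated subgroups: these preserve torsion-freeness, embed $H$, and therefore preserve all prescribed non-relations, while --- on choosing the associated subgroups to be cyclic, free, or otherwise of bounded abelian rank and prescribing the conjugating automorphisms explicitly --- they realize a broad range of new relations and simultaneously keep every abelian subgroup of $H$ of rank at most two. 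Every density claim below is proved by such an enlargement, and the real work throughout is checking that the relations one wants to impose are consistent with torsion-freeness and with the bounded-rank constraint; this bookkeeping is the technical heart of the whole argument.

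\emph{Part (3).} One implication is free: algebraic disjointness of $g$ from $f$ forces $[g,f]=1$. For the converse, for each triple of words $(f,g,h)$ we impose the requirement that, if $f,g$ are nontrivial, $[f,g]=1$ and $[f,h]\neq 1$, then there exist $a,b\in C_G(g)$ with $1\neq[a,[b,h]]\in C_G(g)$. Starting from a condition in which (after first deciding $[f,g]$ and $[f,h]$ if necessary) $f,g$ are nontrivial, $[f,g]=1$ and $[f,h]\neq 1$ --- so that $h\notin\langle g\rangle$ --- we adjoin a fresh $b$ commuting with $g$, set $c:=[b,h]$, which is nontrivial, and then adjoin a fresh $a$ that centralizes $g$ and satisfies $[a,c]=g$, realized as an HNN extension conjugating $\langle g,c\rangle$ by the explicit automorphism fixing $g$ and sending $c$ to $gc$. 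In the enlargement $a,b\in C_G(g)$ and $[a,[b,h]]=[a,c]=g\neq 1$ lies in $C_G(g)$, as required. Intersecting over all triples yields the comeager set; the point to verify is that $\langle g,c\rangle$ has a structure (free of rank two, or free abelian of rank two) on which the prescribed automorphism makes sense, which is arranged by choosing $b$ generically.

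\emph{Part (1).} We first isolate an algebraic obstruction. If $\Gamma\le\Homeo(X)$ is locally moving with $|X|\ge 2$, then, using that $X$ is Hausdorff and that, by local moving, every nonempty open set has a nontrivial rigid stabilizer and hence at least two points, one iteratively separates points to obtain infinitely many pairwise disjoint nonempty open sets $V_1,V_2,\dots$; choosing $1\neq g_i\in\Gamma_{V_i}$ one gets pairwise commuting elements with $\langle g_i\rangle\cap\langle g_j\mid j\neq i\rangle=1$, so $\langle g_1,g_2,\dots\rangle$ is an internal direct sum of nontrivial cyclic groups, i.e.\ a copy of $\bigoplus_{i\ge1}\Z$ when $\Gamma$ is torsion-free. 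Hence a torsion-free group whose abelian subgroups all have finite rank (in particular, rank at most two) contains no such configuration and is not weakly Rubin; the same then holds for every quotient of our $G$ provided $G$ has no proper nontrivial quotients at all. Accordingly we force, simultaneously: that $G$ is simple --- for each pair $(a,b)$ with $a\neq1$ the HNN extension $\langle H,t\mid tat^{-1}=ba\rangle$ (legitimate since $a$ and $ba$ have infinite order) makes $b$ a product of conjugates of $a$, so that densely handling all such pairs forces $\langle\langle a\rangle\rangle=G$ for every $a\neq1$ --- while maintaining the rank constraint throughout. Then the only quotients of $G$ are $1$ and $G$, neither of which is weakly Rubin; and since a Rubin group is in particular weakly Rubin, $G$ has no Rubin quotient either.

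\emph{Part (2).} Using (3), for $f\neq1$ the set $S_f$ in the definition of $\mathcal{P}(G)$ equals $\{g^{12}\mid g\in C_G(f)\}$, so $C_G(S_f)=\bigcap_{1\neq g\in C_G(f)}C_G(g^{12})$, a subgroup containing $\langle f\rangle$; and $C_G(S_1)$ is trivial generically, since no nontrivial element commutes with all twelfth powers. The crux is to force the centralizers to be just large and twisted enough that $C_G(S_f)=\langle f\rangle$ exactly: for every pair $(f,x)$ with $x\notin\langle f\rangle$ one forces an element $g\in C_G(f)$ with $[g^{12},x]\neq1$, for instance by an HNN extension conjugating $f$ to a competing root of a power of $f$, so that $x\notin C_G(S_f)$. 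Granting this, the map $f\mapsto C_G(S_f)$ has image exactly the set of cyclic subgroups of $G$, and --- a subgroup of a cyclic group being cyclic --- closure under finite intersection adds nothing, so $\mathcal{P}(G)$ is precisely the poset of cyclic subgroups of $G$ under inclusion. Finally one forces an infinite tower of $2$-power roots $\dots,f_2,f_1,f_0$ with $f_{n+1}^2=f_n$ (producing a bi-infinite chain in $\mathcal{P}(G)$) and an infinite family of elements generating pairwise incomparable cyclic subgroups (producing the infinite antichain). The main obstacle in (2), and in coordinating the whole theorem, is calibration: centralizers must be forced nonabelian and twisted enough that $C_G(S_f)$ collapses to $\langle f\rangle$, yet tame enough (bounded abelian rank, no infinite direct sums) that part (1) survives, all while preserving torsion-freeness --- so one must verify that a single coherent system of amalgam and HNN enlargements accomplishes everything at once.
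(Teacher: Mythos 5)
Your overall framework (Banach--Mazur forcing on the space of enumerated torsion-free groups, with density of conditions checked via amalgams and HNN extensions) is the paper's framework, and your part (2) is essentially the paper's argument (Corollaries~\ref{cor:chain} and~\ref{cor:antichain}) modulo part (3). But there are two genuine gaps. First, your route to part (1) cannot work. You propose to force simplicity \emph{while maintaining that every abelian subgroup of $G$ has rank at most two}, so that $G$ avoids the $\bigoplus_{i\ge 1}\Z$ configuration produced by a locally moving action. That rank constraint is not a generic property: by Theorem~\ref{thm:forcingGKL} (quoted from \cite{GKL2023} and used throughout the paper), a generic torsion-free group contains \emph{every} finitely generated torsion-free group with solvable word problem, in particular $\Z^n$ for all $n$. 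Hence the set of groups with all abelian subgroups of rank at most two is contained in the complement of a comeager set, i.e.\ is meager, and player A has no winning strategy for it; your intended comeager set for (1) would be disjoint from the comeager set needed for the rest of the theorem. The paper's mechanism is different and needs no extra forcing: by part (3) applied to the pair $(f,f)$, every nontrivial $f$ is algebraically disjoint from \emph{itself}, so if the image of $G$ (which is all of $G$, by simplicity) were locally moving, Proposition~\ref{prop:disjoint}(2) would force $\supp f^{12}\cap\supp f=\varnothing$, i.e.\ $f^{12}=1$, contradicting torsion-freeness.

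Second, in part (3) the admissibility of your key move is exactly the hard point, and your justification does not hold up. After adjoining $b$ with $[b,g]=1$ and setting $c=[b,h]$, you want to adjoin $a$ via an HNN extension whose stable letter conjugates $\langle g,c\rangle$ by ``$g\mapsto g$, $c\mapsto gc$,'' and you assert that $\langle g,c\rangle$ can be arranged to be free or free abelian of rank two ``by choosing $b$ generically.'' But the condition $H$ is an \emph{arbitrary} torsion-free group and the relations holding between $g$ and $c$ in the enlargement are dictated by the (unknown) relations between $g$ and $h$ in $H$, not by how $b$ is chosen: for instance a Baumslag--Solitar type relation $hg^nh^{-1}=g^m$ in $H$ already forces $[g^n,c]=1$, so $\langle g,c\rangle$ need not be free, and in general there is no reason the assignment $g\mapsto g$, $c\mapsto gc$ is even a well-defined injective endomorphism of $\langle g,c\rangle$, which is what the HNN extension requires. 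Controlling this subgroup for an arbitrary two-generated torsion-free $\langle g,h\rangle$ is precisely the content of Section~\ref{sec:combinatorial group theory} of the paper: one amalgamates two copies of $\langle g,h\rangle$ over $\langle g\rangle$, takes $\gamma=h_2^{-1}h_1$, and proves via a Van Kampen diagram argument (Lemma~\ref{lem:amalgamation}, fed into Lemma~\ref{lem:normal-closure}) that no nonzero power of $\gamma$ lies in the normal closure of $g$, which is the weaker structural fact that suffices to build the overgroup of Lemma~\ref{lem:overgroup} (as $K_1\times(K_2*\Z)$) containing the required $a$. Your proposal skips this step entirely, and without a replacement for it the forcing move you describe has not been shown to be consistent with torsion-freeness and the prior condition; note also that the degenerate case where $\langle g,h\rangle$ is cyclic ($g=h^n$, $|n|>1$) needs its own (easier) treatment, as in the paper's proof of Lemma~\ref{lem:overgroup}.
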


It is easy to find groups which do not admit weakly Rubin actions on
Hausdorff topological spaces with at least two points; free groups are one
such example, since no two elements in a free group can generate a copy of
$\Z^2$. The content of Theorem~\ref{thm:main} is two-fold: most torsion-free groups
cannot have weakly Rubin actions, but look like they should.

\begin{rmk}
  A generic torsion-free group has a rich subgroup structure. 
In fact, it contains every finitely generated torsion-free group with a solvable word problem as a subgroup.
In turn, the poset of cyclic subgroups ordered by inclusion is very rich: for
instance, a generic torsion-free group will contain a copy of $\Q*_{\Z}\Q$,
the amalgamated product of two copies of $\Q$ over their respective copies of
the integers. We thus obtain many cyclic subgroups which are all distinct, but
which contain a common cyclic subgroup. One can repeat a process of
this amalgamation and taking direct sums of groups
\emph{ad infinitum}, thus building a small piece of the very complicated
poset of cyclic subgroups of a generic torsion-free group.
Also, we remark that any comeager subset of the space of torsion-free groups
will contain continuum many groups up to isomorphism (this is a direct consequence of Theorem $1.1.6$ of \cite{GKL2023}).  
\end{rmk}

Theorem~\ref{thm:main} is established through model theoretic forcing; the
key is to find a
Banach--Mazur game in which one player can force any pair of commuting elements
which do not share a common power to be mutually algebraically disjoint. The same player
also forces any two non-trivial elements to be conjugate, resulting in simple
compiled groups.
We also emphasize the fact that our result is much easier to prove if the torsion-free requirement is dropped, though the requirement that the group be torsion-free is central to our program of investigating obstructions to group actions on compact manifolds:
indeed, the existence of torsion is often an elementary source of such obstructions, and should be excluded for the development of a deeper theory. 

\section{Background}\label{sec:background}

In this section, we recall some basic notions of Rubin's theory of group actions
on topological spaces, and model theoretic forcing.

\subsection{Algebraic disjointness}
Let $\Gamma\leq\Homeo(X)$ be a Rubin action on a Hausdorff topological space $X$.
The relationship
between algebraic disjointness and actual disjointness of supports is
given by the following:

\begin{prop}[See~\cite{Rubin1996,rubin-short,KK2021book}]\label{prop:disjoint}
    Let $\Gamma\leq\Homeo(X)$ be a locally moving action.
    \begin{enumerate}
        \item If $f,g\in\Gamma$ satisfy $(\supp g)\cap(\supp f)=\varnothing$ then
        $f$ and $g$ are mutually algebraically disjoint.
        \item If $f,g\in\Gamma$ are such that $g$ is
    algebraically disjoint from $f$, then $(\supp g^{12})\cap(\supp f)=\varnothing$.
    \end{enumerate}
\end{prop}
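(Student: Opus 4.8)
The plan is to prove each direction of Proposition~\ref{prop:disjoint} by working directly inside the homeomorphism group $\Homeo(X)$, exploiting the local moving hypothesis to manufacture the elements required by the definition of algebraic disjointness.

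For part (1), suppose $\supp f$ and $\supp g$ are disjoint open sets. Given $h\in\Gamma$ with $[f,h]\neq 1$, I first want to locate a point moved by the commutator $[f,h]$, which forces some point $p\in\supp f$ with $h(p)\notin\supp f$, or more usefully, an open set $V$ on which $[f,h]$ acts nontrivially and such that $V$ meets $\supp f$ in an interesting way. The key observation is that since $\supp f$ is disjoint from $\supp g$, any homeomorphism supported inside $\supp g$ automatically commutes with $f$, so $C_\Gamma(f)\supseteq \Gamma_{\supp g}$ (the rigid stabilizer), and more generally $C_\Gamma(f)$ contains $\Gamma_U$ for any open $U$ disjoint from $\supp f$. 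Now I pick a small open set $W\subseteq \supp g$ (nonempty, by local moving $\Gamma_{\supp g}\neq 1$, so $\supp g\neq\varnothing$) on which to build $a$ and $b$: I want $a,b\in \Gamma_W\subseteq C_\Gamma(f)$ with $[a,[b,h]]\neq 1$ but still lying in $C_\Gamma(f)$. The commutator $[a,[b,h]]$ is supported in $W\cup h(W)\cup\cdots$; the subtle point is ensuring this stays disjoint from $\supp f$ and is nontrivial. This is exactly the technical heart of Rubin's argument and I expect to need to shrink $W$ relative to $h$ (replacing $W$ by a small set so that $W$ and $h^{\pm 1}(W)$ are all contained in an open set disjoint from $\supp f$ — possible because $h$ is continuous and $\supp f$, $\supp g$ are disjoint, though one must be slightly careful when $h$ drags points of $\supp g$ toward $\supp f$; here one uses that $[f,h]\neq 1$ to find a region where $h$ behaves well), and then invoke the local moving / local density machinery to get the nontriviality of the iterated commutator. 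The main obstacle is precisely this bookkeeping: arranging $W, h(W)$ to avoid $\supp f$ while keeping $[a,[b,h]]\ne 1$.

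For part (2), I argue contrapositively: suppose $\supp(g^{12})\cap\supp f\neq\varnothing$, and produce $h$ witnessing that $g$ is \emph{not} algebraically disjoint from $f$, i.e. $[f,h]\neq 1$ yet for \emph{every} $a,b\in C_\Gamma(g)$ one has $[a,[b,h]]=1$ or $[a,[b,h]]\notin C_\Gamma(g)$. The idea: since $\supp(g^{12})$ meets $\supp f$, there is a point $p$ with $g^{12}(p)\neq p$ and $f$ nontrivial near $p$; shrink to a small open set $U$ around $p$ that is ``independent'' in the sense that $g$ translates a neighborhood of $p$ off itself (using that $g^{12}$, hence $g$, moves $p$ — here the power $12$ is what lets one pass from nontriviality of the support of a power to genuine freeness/translation behavior via Rubin's combinatorial lemmas on moving small sets). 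Using local moving, choose $h$ supported in a tiny open set $U'\subseteq U\cap\supp f$ chosen so that $U', g^j(U')$ for the relevant small range of $j$ are pairwise disjoint, with $[f,h]\neq 1$ (possible since $h$ can be taken to genuinely interact with $f$ on $U'$). Then for $a,b\in C_\Gamma(g)$: elements of $C_\Gamma(g)$ must respect the ``$g$-orbit structure'' of these small sets, and a direct computation of $[a,[b,h]]$ shows either it is trivial or its support is dragged outside any region where it could centralize $g$ — contradicting $[a,[b,h]]\in C_\Gamma(g)$. The hard part here is the structural description of $C_\Gamma(g)$ restricted to the relevant region; this is where the number $12$ genuinely enters, and I expect to cite the fine combinatorial lemmas from~\cite{Rubin1996,KK2021book} rather than reprove them, since the full argument is an intricate fixed-point/orbit analysis that is standard in this theory.

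Overall, the proof strategy is: (i) translate algebraic disjointness into statements about rigid stabilizers and centralizers of supports; (ii) for (1), use that disjoint supports give commuting elements and carefully choose a small test region inside $\supp g$; (iii) for (2), use the contrapositive and the orbit-combinatorics of $C_\Gamma(g)$ together with the power $12$ to derive a contradiction. The single most delicate point throughout is controlling how the conjugating element $h$ moves small open sets relative to $\supp f$ and the $g$-orbits — this is where local moving (rather than merely faithful) is essential, and it is the step I would expect to occupy most of the write-up.
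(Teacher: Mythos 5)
The paper itself gives no proof of this proposition---it is quoted from Rubin and from Kim--Koberda---so your sketch has to be judged against the standard argument, and part (1) is miswired in a way that would make it fail. By the paper's definition, to show $g$ is algebraically disjoint from $f$ you must, for each $h$ with $[f,h]\neq 1$, produce $a,b\in C_\Gamma(g)$ with $1\neq[a,[b,h]]\in C_\Gamma(g)$. You instead take $a,b\in\Gamma_W$ for $W\subseteq\supp g$, i.e.\ elements of $C_\Gamma(f)$, while your only hypothesis on $h$ is $[f,h]\neq 1$; this pairing matches neither direction of the definition, and the construction genuinely cannot work: if $h$ happens to be supported inside $\supp f$ (away from $\supp g$), then $h$ commutes with everything supported in $\supp g$, so $[b,h]=1$ for every candidate $b$ and $[a,[b,h]]$ is always trivial. (Also, $[f,h]\neq 1$ does not force a point of $\supp f$ to be moved \emph{out of} $\supp f$ by $h$; it only forces $h$ to move some point of $\supp f$.) The correct witnesses live inside $\supp f$, not $\supp g$: since $[f,h]\neq 1$ there is $y\in\supp f$ with $h(y)\neq y$; choose open $V$ with $y\in V\subseteq\supp f$ and $h(V)\cap V=\varnothing$, and take $1\neq b\in\Gamma_V$ by local moving. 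Then $c:=[b,h]$ agrees with $b^{-1}$ on $V$, so $c\neq 1$; shrink again to $W\subseteq V$ with $c(W)\cap W=\varnothing$, take $1\neq a\in\Gamma_W$, and check $[a,c]\neq 1$ with support in $W\cup c^{-1}(W)\subseteq V\subseteq\supp f$. Disjointness of $\supp f$ and $\supp g$ then puts $a$, $b$, and $[a,[b,h]]$ in $C_\Gamma(g)$. In particular the bookkeeping you identify as the main obstacle (keeping $W$ and $h(W)$ away from $\supp f$) is not the real issue---everything relevant stays inside $\supp f$---and the other direction follows by symmetry of the hypothesis.

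For part (2) you do not give an argument: the choice of $h$ supported in a small set meeting $\supp f$, and the claim that every $[a,[b,h]]$ with $a,b\in C_\Gamma(g)$ fails the test, are exactly the content of Rubin's combinatorial lemmas (where the exponent $12$ enters), and you explicitly defer them to the references. Citing the literature for this half is defensible---the paper does precisely that for the whole proposition---but then it should be presented as a citation rather than a proof sketch, since the unproved structural claims about $C_\Gamma(g)$ near your chosen set are doing all the work.
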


\subsection{The space of enumerated groups}
A countably infinite group $G$ can be abstractly identified with the natural numbers
$\N$. The multiplication operation is then a map $\N\times\N\longrightarrow \N$,
and the inversion map is just a map $\N\longrightarrow\N$. Countable groups can
be viewed as a subspace of the Polish (i.e.~separable and completely metrizable) space
\[X=\N^{\N\times\N}\times\N^{\N}\times\N,\]
corresponding to the choice of multiplication and inversion functions, and the
unique identity element; this space is given the natural product topology.
The subspace $\GG\subseteq X$ corresponding to groups is
closed in this topology and is therefore a Polish space in its own right;
we discuss this further below.  We remark that spaces of countable models of a theory are of
general interest in infinitary logic and descriptive set theory;
see~\cite{marker-infinitary}.

Let $\underline x$ denote a finite tuple of variables.
We let $\Sigma(\underline x)$ be a quantifier-free formula in the language of group
theory consisting of finitely many equations and inequations of the form
$w(\underline x)=1$ or $w(\underline x)\neq 1$. Such a $\Sigma(\underline x)$
is called a \emph{finite system}. If $\underline g$ is a finite tuple of natural numbers, the system $\Sigma(\underline g)$ defines a clopen set $U_{\Sigma}$
in $\GG$ by considering
the groups in which $\Sigma(\underline g)$ holds, and these clopen sets determine a basis for the topology of $\GG$.

The preceding remarks have the important consequence
that the subspace of countable torsion-free groups
$\GG_{\mathrm{tf}}$ is closed in $\GG$. Indeed, the
requirement that a particular element of a group has a fixed finite order is an
open condition, and so the property of being torsion-free is a closed
condition; cf.~\cite{GKL2023}.
It follows that $\GG_{\mathrm{tf}}$ is a Polish subspace and the intersection of the aforementioned clopen sets with $\GG_{\mathrm{tf}}$ provides a 
basis of clopen sets for $\GG_{\mathrm{tf}}$. Since $\GG_{\mathrm{tf}}$ is Polish,
the Baire Category Theorem holds:
a countable intersection of open dense sets
in $\GG_{\mathrm{tf}}$ is dense.

\subsection{Banach--Mazur games}

Let $P$ be a property of countable groups, and let $\GG_P$ be the subset of
$\GG$ consisting of groups satisfying $P$. A \emph{Banach--Mazur game}~\cite{hodges-book,marker-infinitary}
is a game in which players A and B take turns
choosing finite systems $\{\Sigma_i(\underline{g})\}_{i\in\N}$ such that the nonempty clopen sets
$U_i=U_{\Sigma_i}$ are nested and whose intersection determines a class
of groups in $\GG$ called the \emph{compiled groups} of the game. The nestedness of these
open sets is crucial, since it prevents players from placing inconsistent conditions
on the compiled groups.
Any group
$G\in\bigcap_i U_i$ is called a \emph{compiled group} for the game.
One
should imagine that $A$ and $B$ are building a (class of groups) together,
by gradually naming the elements of the group and expressing, finitely many
equations
and inequations at a time, the (partial) multiplication table of a (class
of) groups. It may be the case that the full multiplication table can
be determined over the course of a game, in which case the compiled group
is unique. The nonemptiness of the sets $\{U_i\}_{i\in\N}$ is simply
expressing the consistency of the finite systems played up to that round
of the game; in particular, at least one group satisfies the conditions
forced at any round.

At each stage, both $A$
and $B$ may only play a finite system (and in particular not a family
of finite systems). On any turn, either player may play (any subset of)
the finite systems that have been played up to that round; this amounts
to that player wasting a turn. For the sake of convenience, we
assume that after $n$ rounds, all natural numbers below $2n$ have been
played (and possibly more).

Player A \emph{wins} if 
the compiled groups lie in $\GG_P$. 
Player A has a \emph{winning strategy} if for
any possible
position $U_n$ in the game, there is a possible choice of systems for which
player A wins, regardless of how player B plays; the choice of systems
played by player A need not be fixed beforehand, but can depend on exactly
how player B proceeds.
It is a standard fact that player A has a
winning strategy if and only if $\GG_P$ is \emph{comeager},
which is to say that $\GG_P$ contains a countable intersection of dense open sets.
We say that $P$ is a \emph{generic} property of groups if this holds.
Note that a countably infinite conjunction of generic properties (i.e.~simultaneously requiring all of the countably many properties) is generic, from the Baire category theorem; indeed, the set of groups with each of the generic
properties contains a countable intersection of dense open sets, and so the set of groups with countably many prescribed generic properties also contains a countable intersection of dense open sets. 

Similarly, player B wins if the compiled groups lie in
the complement of $\GG_P$, and has a winning strategy if for any
position $U_n$ in the game, there is a possible choice of systems for which
player B wins. For instance, suppose $P$ is the property of having at least
two conjugacy classes (in a torsion-free group). Without loss of generality,
$0$ has been declared to be the identity by player A in the first round.
At every turn, player
B chooses an $n\in\N$ exceeding the maximum of the numbers that have
been played by either player, declares $n$ to be distinct from $\{0,\ldots,
n-1\}$, and declares that $n$ conjugates $k$ to $k+1$ for $k\geq 1$.
This is easily seen to be a winning strategy for player B.

\subsection{Divisible groups}
In later parts of this paper, \emph{divisible groups} will arise.
A general reference on divisible groups is~\cite{kaplansky-book}.
Divisible
groups are usually (though not always, like in this paper)
abelian groups $A$ wherein for all $x\in A$, the equation
$n\cdot y=x$ has a solution $y\in A$. There is an obvious generalization
to nonabelian groups, wherein one requires solutions to the equation
$y^n=x$. To allow for torsion-free divisible groups, one need not require
$y$ to be non-trivial.

Abelian divisible groups have a well-defined torsion subgroup. An abelian
torsion-free
divisible group is isomorphic to a Cartesian power of the rationals,
say $\Q^I$. The cardinality of the set $I$ is called the \emph{rank} of
the divisible group.

\section{Some combinatorial group theory}\label{sec:combinatorial group theory}

This section contains a technical foray into combinatorial group theory,
which will be crucial for setting up a suitable Banach--Mazur game in
Section~\ref{sec:forcing}. We will use standard ideas from combinatorial group
theory, which could be found in~\cite{Serre1977,LS2001}, for instance.

We will always assume that abstract relations
in a group are reduced and cyclically reduced. Moreover, when
dealing with a group presentation, we will always assume that the set of
relations is closed under taking inverses and cyclic permutations.
Our standing notation will be that $G=\langle g,h\mid\mathcal R_0\rangle$ is
a subgroup of larger ambient torsion-free group $\langle f,g,h\rangle$, wherein $[f,h]\neq 1$ but $[f,g]=1$.
Note that this implies that $h\notin \langle g\rangle$ and that if $G$ is cyclic then $g=h^n$ for some
    $n\in \mathbb{Z}$ with $|n|>1$.

\begin{lemma}\label{lem:overgroup}
    Let the group $G$ defined
    above be non-trivial and torsion-free group. Suppose furthermore that
    $g,h\neq 1$, and if $G$ is cyclic then $g=h^n$ for some
    $n\in \mathbb{Z}$ with $|n|>1$.
    Then there exists a finitely generated torsion-free group $\gam$
    such that:
    \begin{enumerate}
        \item $G$ embeds as a subgroup of $\gam$.
        \item There exist distinct elements
        $a,b\in\gam\setminus 1$ centralizing $g\in G$.
        \item The commutator $[a,[b,h]]\neq 1$ and centralizes $g$.
    \end{enumerate}
\end{lemma}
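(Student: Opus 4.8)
The plan is to build $\Gamma$ as a two-step iterated amalgamated free product over finitely generated subgroups of $G$, so that torsion-freeness and finite generation come for free: an amalgamated product of torsion-free groups over a common subgroup is torsion-free, and the amalgams below will be visibly $4$-generated. Before starting I would record that the hypotheses force $h\notin\langle g\rangle$. Indeed, if $G$ is non-cyclic this is immediate from $G=\langle g,h\rangle$; and if $G$ is cyclic then $g=h^n$ with $|n|>1$, so $h=g^m$ would give $g=g^{mn}$, hence $mn=1$, contradicting $|n|>1$.

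First I would adjoin a generator $b$ commuting with $g$: put $\Gamma_1:=G*_{\langle g\rangle}\bigl(\langle g\rangle\times\langle b\rangle\bigr)$ and $c:=[b,h]$. Since $b,h\notin\langle g\rangle$, the word $bhb^{-1}h^{-1}$ is cyclically reduced of syllable length $4$ in $\Gamma_1$; in particular $c\neq 1$, and $c$ acts on the Bass--Serre tree $T$ of $\Gamma_1$ as a hyperbolic isometry of translation length $2$. Now use the identity $g[a,c]g^{-1}=[a,gcg^{-1}]$, valid whenever $[a,g]=1$: it shows that $[a,c]$ commutes with $g$ exactly when $a$ commutes with $e:=c^{-1}gcg^{-1}$. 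Accordingly set $\Gamma:=\Gamma_1*_{\langle g,e\rangle}\bigl(\langle g,e\rangle\times\langle a\rangle\bigr)$, a finitely generated (by $g,h,b,a$) torsion-free group containing $G$. By construction $a$ and $b$ are nontrivial and commute with $g$, and $[a,[b,h]]=[a,c]$ commutes with $g$; note that when $c$ already commutes with $g$ --- which is exactly when $h^{-1}gh\in\langle g\rangle$ --- one has $e=1$ and the second amalgam is simply taken over $\langle g\rangle$.

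It remains to prove $[a,[b,h]]\neq1$, and this is the step I expect to be the main obstacle. Reading off the centralizer of $a$ from the action of $\Gamma$ on the Bass--Serre tree of the second amalgam gives $C_\Gamma(a)=\langle g,e\rangle\times\langle a\rangle$; since $\langle g,e\rangle=\langle g,c^{-1}gc\rangle$, we get $[a,c]\neq1$ if and only if $c\notin\langle g,c^{-1}gc\rangle$ inside $\Gamma_1$. (The danger is that the auxiliary generator $a$, introduced only to commute with $g$ and $e$, ends up commuting with $[b,h]$ as well, collapsing the commutator.) To see that $c\notin\langle g,c^{-1}gc\rangle$, I would work on $T$: the subgroup $\langle g,c^{-1}gc\rangle$ is generated by the two elliptic isometries $g$ and $c^{-1}gc$, whose fixed subtrees are $\mathrm{Fix}(g)$ and $c^{-1}\mathrm{Fix}(g)$. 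Locating these on $T$ --- here one uses $b,h\notin\langle g\rangle$ to compute that $c^{-1}$ moves the relevant vertex distance $4$ while $\mathrm{Fix}(g)$ reaches only distance $1$ toward it --- one finds that $\mathrm{Fix}(g)$ and $c^{-1}\mathrm{Fix}(g)$ are either non-disjoint, in which case $\langle g,c^{-1}gc\rangle$ fixes a point of $T$ and contains no hyperbolic element, or disjoint at distance exactly $2$, in which case every hyperbolic element of $\langle g,c^{-1}gc\rangle$ has translation length at least $4$. Either way the translation-length-$2$ element $c$ cannot lie in $\langle g,c^{-1}gc\rangle$, which completes the proof.
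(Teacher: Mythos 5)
Your reduction framework is sound and genuinely different from the paper's: the two-step amalgam $\Gamma_1=G*_{\langle g\rangle}(\langle g\rangle\times\langle b\rangle)$ and $\Gamma=\Gamma_1*_{D}(D\times\langle a\rangle)$ with $D=\langle g,c^{-1}gc\rangle$ is a clean construction, the centralizer computation $C_\Gamma(a)=D\times\langle a\rangle$ is correct, and the commutator identity does show that $[a,c]$ centralizes $g$ once $a$ centralizes $g$ and $c^{-1}gc$. (The paper instead builds $b$ as the swapping automorphism of the double $G*_{\langle g\rangle}G$ and reduces to a normal-closure condition, Lemma~\ref{lem:amalgamation}, proved by a Van Kampen diagram argument, fed into Lemma~\ref{lem:normal-closure}.) So everything correctly reduces to the single statement $c\notin\langle g,c^{-1}gc\rangle$ in $\Gamma_1$ --- which is precisely the analogue of the paper's hard lemma, and it is here that your argument breaks.

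The concrete error is the translation length of $c$: a cyclically reduced word of syllable length $4$ in an amalgam acts on the Bass--Serre tree with translation length $4$, not $2$ (check directly: $d(A,cA)=4$ along the path $A,B,bA,bhB,\,bhb^{-1}A$, with no backtracking since $b,h\notin\langle g\rangle$). With this corrected, your dichotomy no longer yields a contradiction: in the case where $\mathrm{Fix}(g)$ and $c^{-1}\mathrm{Fix}(g)$ are disjoint, the distance between them is at most $2$ (the vertex $A$ lies in $\mathrm{Fix}(g)$ and $d(A,c^{-1}A)=2$), so the group $D$ really does contain hyperbolic elements of translation length exactly $4$, for instance $g\cdot c^{-1}gc$; hence "every hyperbolic element of $D$ has translation length at least $4$" cannot exclude $c\in D$. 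The claim that the distance is \emph{exactly} $2$ is also unjustified (how far $\mathrm{Fix}(g)$ extends depends heavily on $G$, e.g.\ when $hg^nh^{-1}=g^m$ the fixed tree of $g$ propagates along $h$-cosets, and in that situation $c$ even commutes with $g^m$, so $g$ and $c^{-1}gc$ can satisfy nontrivial relations such as $(c^{-1}gc)^m=g^m$), but even granting it the numerology does not close. To finish by this route you would need an invariant finer than translation length --- e.g.\ an analysis of which edges the axis of $c$ crosses versus the axes of elements of $\langle g,c^{-1}gc\rangle$, or a normal-form/Van Kampen argument of the kind the paper actually carries out --- so as it stands the crucial step (3), $[a,[b,h]]\neq 1$, is not proved.
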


We will prove Lemma~\ref{lem:overgroup} in both the cyclic and non-cyclic
cases below within the body of the proof, though in order to prove
Lemma~\ref{lem:overgroup} in the
non-cyclic case, we will need to establish the following lemma.

\begin{lemma}\label{lem:normal-closure}
    Let $K=\langle g,\gamma\rangle$ be a non-trivial and torsion-free group, and suppose that for all nonzero
    $n\in\Z$ we have $\gamma^n\notin \langle\langle g\rangle\rangle_K$. Then there is
    a finitely generated torsion-free overgroup $\gam$ and a non-trivial
    $a\in\gam$ such that:
    \begin{enumerate}
        \item $K$ embeds as a subgroup of $\gam$.
        \item The commutator $[a,\gamma]$ is non-trivial and centralizes $g$.
        \item $[a,g]=1$.
    \end{enumerate}
\end{lemma}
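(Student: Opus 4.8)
\emph{Proof plan.} Write $N=\langle\langle g\rangle\rangle_K$ for the normal closure of $g$ in $K$. Taking $n=1$ in the hypothesis gives $\gamma\notin N$; in particular $N$ is a proper subgroup of $K$ and $\gamma\neq 1$. I would take
\[
\Gamma \;=\; K \ast_{N} \bigl(N\times\langle a\rangle\bigr),
\]
the amalgamated free product of $K$ with $N\times\langle a\rangle$ (where $\langle a\rangle\cong\Z$), amalgamating the subgroup $N\leq K$ with the subgroup $N\times\{1\}$ of the second factor. The reason for amalgamating over the entire normal closure $N$, rather than merely over $\langle g\rangle$, is to force the new element $a$ to centralize every $K$-conjugate of $g$; this is exactly what is needed for the last clause.

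First I would record the structural facts. Since $N\leq K=\langle g,\gamma\rangle$, the group $\Gamma$ is generated by $g,\gamma,a$, so it is finitely generated. Both factors are torsion-free (the factor $N\times\langle a\rangle$ because $N\leq K$ is), so $\Gamma$ is torsion-free, since every element of finite order in an amalgamated free product is conjugate into a factor. By the normal form theorem for amalgams~\cite{LS2001}, $K$ embeds into $\Gamma$, which is (1), and $a$ has infinite order, so $a\neq 1$. For (3): inside the second factor, $a$ commutes with $N\times\{1\}$, hence $a$ centralizes $N\leq\Gamma$, and $g\in N$, so $[a,g]=1$. For the first half of (2): in $\Gamma$ one has $a\in(N\times\langle a\rangle)\setminus N$ and $\gamma\in K\setminus N$, so the word $a^{-1}\gamma^{-1}a\gamma$ representing $[a,\gamma]$ (or $a\gamma a^{-1}\gamma^{-1}$, depending on the commutator convention) is a reduced word of length $4$ in the amalgam, alternating between the two factors with no letter in the edge group $N$; hence it is nontrivial.

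The remaining clause, that $[a,\gamma]$ centralizes $g$, is the one that uses the choice of edge group. Since $g\in N$ and $N\trianglelefteq K$, the element $\gamma g\gamma^{-1}$ also lies in $N$, so $a$ centralizes both $g$ and $\gamma g\gamma^{-1}$; substituting $\gamma g\gamma^{-1}\in N$ into $[a,\gamma]\,g\,[a,\gamma]^{-1}$ and cancelling, a one-line computation collapses this to $a^{-1}ga$, which equals $g$ since $g\in N$, completing (2). I would treat the degenerate case $g=1$ by inspection only: then $N=\{1\}$, the hypothesis forces $K=\langle\gamma\rangle\cong\Z$, and $\Gamma=K\ast\langle a\rangle$ is free of rank $2$, so all three conclusions are immediate. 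The only real obstacle here is choosing the correct overgroup; once the amalgamation over $N$ is in place, each clause follows from standard normal form and torsion results for amalgamated free products, the sole genuine computation being the collapse in the last clause.
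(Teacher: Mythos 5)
Your proof is correct, but it takes a genuinely different route from the paper. The paper builds the overgroup as a direct product $K_1\times(K_2\ast\Z)$ of two copies of $K$ (with a free $\Z$ factor), re-embeds $K$ via the ``diagonal'' map $g\mapsto(g_1,1)$, $\gamma\mapsto(\gamma_1,\gamma_2)$, and takes $a$ to be the free generator in the second coordinate; verifying that this re-embedding is injective and well defined is where the full hypothesis is used, via the observation that every relation of $K$ must have zero exponent sum in $\gamma$ (otherwise some $\gamma^n$ would lie in $\langle\langle g\rangle\rangle_K$). You instead amalgamate $K$ with $N\times\langle a\rangle$ over the whole normal closure $N=\langle\langle g\rangle\rangle_K$, so that the embedding of $K$ and the torsion-freeness of $\Gamma$ come for free from the standard normal form and torsion theorems for amalgams, nontriviality of $[a,\gamma]$ follows from the length-$4$ alternating word (using only $\gamma\notin N$, i.e.\ the case $n=1$ of the hypothesis), and the fact that $[a,\gamma]$ centralizes $g$ is your short conjugation computation using normality of $N$ and the fact that $a$ centralizes all of $N$ -- I checked this collapse and it is correct under either commutator convention. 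Your finite generation claim is also fine: even though $N$ (hence the second factor) may be infinitely generated, $\Gamma=\langle g,\gamma,a\rangle$. What each approach buys: yours uses a weaker hypothesis ($\gamma\notin N$ rather than $\gamma^n\notin N$ for all $n\neq 0$) and makes the embedding of $K$ automatic, at the cost of amalgamating over a possibly infinitely generated edge group; the paper's direct-product construction keeps both factors explicit and finitely generated and makes the centralizing conditions visible coordinatewise, but must pay for the embedding with the exponent-sum argument. Since the lemma is only invoked as a black box (via conclusions (1)--(3)) in the proof of Lemma~\ref{lem:overgroup}, your construction serves equally well there.
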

\begin{proof}
    Let $K_1$ and $K_2$ be two isomorphic copies of $K$, with $g_i,\gamma_i\in K_i, 1\leq i\leq 2$
    being the corresponding generators in the two copies. Set
    \[\gam=K_1\times (K_2*\Z),\] and let \[K_3=\langle (g_1,1),(\gamma_1,\gamma_2)\rangle.\]
    Clearly, $\gam$ is torsion-free.
    
    Observe that $K_3$ is isomorphic to $K$ via the map $\phi$ sending
    \[g\mapsto (g_1,1)\quad \gamma\mapsto (\gamma_1,\gamma_2).\] Indeed,
    if the map $\phi$ is a well-defined homomorphism
    of groups then it is clearly surjective; moreover,
    by projection onto
    the first factor, we see that it must be injective as well. Thus, it
    suffices to show that if $w(g,\gamma)$ is a relation in $g$ and $\gamma$ then
    $\phi(w(g,\gamma))$ is the identity. By the assumption that
    $w(g,\gamma)$ is a relation in $g$ and $\gamma$ and the definition of
    $\phi$, we see that
    $\phi(w(g,\gamma))$ will be the identity if and only if $w$ has zero exponent
    sum in $\gamma$.
     Since we assumed that
    $\gamma^n\notin \langle\langle g\rangle\rangle_K$ for all nonzero $n$,
    it is immediate that any relation in $K$ will have zero exponent sum in $\gamma$.

    It thus suffices to find the desired element $a\in C_{\gam}(g)$; we simply
    take a generator $a\in\Z$ of the infinite cyclic group in the free product.
    By construction, $(1,a)$ commutes with $(g_1,1)\in K_3$, and the commutator
    $[(1,a),(\gamma_1,\gamma_2)]$ is non-trivial and commutes with $(g_1,1)$.
\end{proof}

Our goal is to show that Lemma~\ref{lem:overgroup} can
be reduced to the hypotheses of Lemma~\ref{lem:normal-closure}.

\begin{lemma}\label{lem:amalgamation}
    Let $G=\langle g,h\rangle$ be a non-cyclic and torsion-free group, and let $H$ be a free product with amalgamation of
    two copies $G_1=\langle g_1,h_1\rangle$ and $G_2=\langle g_2,h_2\rangle$ of $G$, wherein the subgroups $\langle g_1\rangle$
    and $\langle g_2\rangle$ being amalgamated into a single cyclic group
    $\langle g\rangle$. Consider now the subgroup
    $K=\langle h_2^{-1}h_1, g\rangle$. Then for all nonzero $n\in\Z$, we have
    \[(h_2^{-1}h_1)^n\notin \langle\langle g\rangle\rangle_{K}.\]
\end{lemma}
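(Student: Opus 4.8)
The plan is to reduce the statement to the construction of a single homomorphism, and then to build that homomorphism from the action of $K$ on a Bass--Serre tree. First note it suffices to produce a homomorphism $\phi\colon K\to\Z$ with $\phi(g)=0$ but $\phi(h_2^{-1}h_1)\neq 0$: then $\ker\phi$ is a normal subgroup of $K$ containing $g$, hence contains $\langle\langle g\rangle\rangle_K$, while $\phi\big((h_2^{-1}h_1)^n\big)=n\,\phi(h_2^{-1}h_1)\neq 0$ for all $n\neq 0$. Write $t=h_2^{-1}h_1$, and let $T$ be the Bass--Serre tree of $H=G_1*_{\langle g\rangle}G_2$, with base edge $e$ (stabiliser $\langle g\rangle$) joining vertices $v_1$ (stabiliser $G_1$) and $v_2$ (stabiliser $G_2$). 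Since $G$ is non-cyclic, $h\notin\langle g\rangle$, so $h_1\in G_1\setminus\langle g\rangle$ and $h_2^{-1}\in G_2\setminus\langle g\rangle$; thus $t$ is a cyclically reduced word of syllable length $2$, hence acts hyperbolically on $T$ with translation length $2$, and one checks directly that $v_1,v_2,tv_1$ are consecutive vertices on its axis $A$. In particular $e\subset A$ and $g$ is elliptic, fixing $e$.

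Next, consider the $K$-action on $T$ (subdividing once if $K$ acts with inversions) and let $\Gamma=K\backslash T$. Let $\phi\colon K\to\pi_1(\Gamma)$ be the canonical surjection onto the free group $\pi_1(\Gamma)$; its kernel is the subgroup generated by all vertex stabilisers $\mathrm{Stab}_K(v)$, so $\phi(g)=1$ because $g$ fixes $v_1$. As $K=\langle t,g\rangle$, the group $\pi_1(\Gamma)=\langle\phi(t)\rangle$ is cyclic; being also free, it is either $\Z$ or trivial. So everything comes down to proving $\phi(t)\neq 1$.

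The element $\phi(t)$ is represented by the image in $\Gamma$ of the axis segment $v_1\xrightarrow{e}v_2\xrightarrow{h_2^{-1}e}tv_1$, a closed edge-path of length $2$. This loop is unreduced only if some $k\in K$ fixes $v_2$ and carries the oriented edge from $v_2$ to $v_1$ onto the oriented edge $h_2^{-1}e$ from $v_2$ to $tv_1$; that is, $k\in K\cap G_2$ with $kv_1=h_2^{-1}v_1$. Then $h_2k\in G_1$, so $k\in h_2^{-1}G_1\cap G_2=h_2^{-1}\langle g\rangle$ (using $G_1\cap G_2=\langle g\rangle$), and therefore $h_2\in K$. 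But the folding homomorphism $\alpha\colon H\to G$ that restricts to the defining isomorphism $G_i\cong G$ on each factor sends $K$ onto $\langle\alpha(t),\alpha(g)\rangle=\langle g\rangle$ and sends $h_2$ to $h\notin\langle g\rangle$, so $h_2\notin K$ — a contradiction. Hence the length-$2$ loop is reduced and nontrivial in $\pi_1(\Gamma)$, so $\phi(t)\neq 1$, $\pi_1(\Gamma)\cong\Z$, and $\phi$ is the desired homomorphism.

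The main obstacle is this last step: one must prevent the axis of $t$ from projecting to a backtracking (hence possibly null-homotopic) loop in $\Gamma$, and the decisive input is that $h_1,h_2\notin K$ — which is exactly where the non-cyclicity of $G$, via $h\notin\langle g\rangle$, enters. The only other points requiring care are the standard normal-form facts in the amalgam (that $t$ is cyclically reduced of syllable length $2$ with axis through $e$, that $\mathrm{Stab}_H(e)=\langle g\rangle$, and that $G_1\cap G_2=\langle g\rangle$), all of which are immediate once the tree picture is set up.
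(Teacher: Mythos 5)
Your argument is correct, but it takes a genuinely different route from the paper. The paper argues combinatorially: it takes a putative expression of $(h_2^{-1}h_1)^n$ as a product of $K$-conjugates of $g^{\pm1}$, passes to a minimal-area Van Kampen diagram over the amalgam with a minimal number of occurrences of $h_2^{-1}h_1$, analyzes a non-separating $i$--region on the boundary, and uses the Baumslag--Solitar-type observation (that a relation $hg^nh^{-1}=g^m$ in $G$ forces $[g^n,h_2^{-1}h_1]=1$ in $H$) to shorten the boundary word and contradict minimality. You instead produce a homomorphism $K\to\Z$ killing $g$ but not $t=h_2^{-1}h_1$, built from the $K$-action on the Bass--Serre tree of the amalgam: the projection to $\pi_1$ of the quotient graph kills vertex stabilizers (hence $g$), and the only way the image of the length-two axis segment of $t$ could backtrack is if some $k\in K\cap G_2$ carries $v_1$ to $h_2^{-1}v_1$, which via $G_1\cap G_2=\langle g\rangle$ forces $h_2\in K$; the folding map $H\to G$ (identity on each factor) sends $K$ onto $\langle g\rangle$ and $h_2$ to $h\notin\langle g\rangle$, ruling this out. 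Both proofs use non-cyclicity only through $h\notin\langle g\rangle$. Your approach buys a cleaner, more structural argument (it in fact exhibits a surjection $K/\langle\langle g\rangle\rangle_K\twoheadrightarrow\Z$ sending $t$ to a generator, which immediately gives the statement for all $n\neq 0$), at the cost of invoking standard Bass--Serre machinery (normal forms, edge/vertex stabilizers, the surjection onto $\pi_1$ of the quotient graph with vertex stabilizers in the kernel); the paper's diagrammatic proof is more hands-on and self-contained relative to its fixed presentation, but requires the delicate minimality bookkeeping. All the facts you cite (that $t$ is hyperbolic with $v_1,v_2,tv_1$ consecutive on its axis, $\mathrm{Stab}_H(e)=\langle g\rangle$, $G_1\cap G_2=\langle g\rangle$, and that a reduced positive-length edge loop is nontrivial in $\pi_1$ of a graph) are indeed standard and correctly applied, so I see no gap.
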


Assuming Lemma~\ref{lem:amalgamation}, we obtain Lemma~\ref{lem:overgroup}:

\begin{proof}[Proof of Lemma~\ref{lem:overgroup}]
    We handle the case of $G$ cyclic, and then the case of $G$ non-cyclic.
   In the case where $G$ is cyclic, then it must be the case that for some $n\in \mathbb{Z}$ with $|n|>1$ we have that $h^n=g$.
    Indeed, if $g^n=h$ then $h$ must commute with $f$ which contradicts our assumption.
    We may produce the required overgroup by amalgamating a copy of $\mathbb{Z}^3$ generated by elements $\{a,b,c\}$
    with $G$, by identifying the cyclic subgroups
    $\langle c\rangle$ and $\langle g\rangle$.
    Using the normal form for amalgamated free products, it is
    clear that $[a,[b,h]]\neq 1$.
    Moreover, \[[[a,[b,h]],g]=1,\] since $[\alpha,g]=1$ for each
    $\alpha\in \{a,b,h\}$.
    
    Assume now that $G$ is non-cyclic.
    Let $Q$ be the group $H$ obtained by amalgamating two copies of $G$ along
    their respective copies of $g$, and let $\beta$ be the automorphism of
    $Q$ which fixes $g$ and exchanges the two copies $h_1$ and $h_2$
    of $h$ in $Q$, coming from
    the two amalgamated subgroups. We set $\gam_0$ to be the semidirect product
    of $Q$ with $\Z$, where $\Z$ acts on $Q$ by $\beta$. The group
    $Q$ is torsion-free by standard methods; see the discussion of free products with amalgamation in \cite{Serre1977}, for instance. Therefore
    $\gam_0$ is
    torsion-free as well.

    We identify the generator of $\Z$ as an element $b$ of $\gam_0$, and
    we note that $b$ centralizes $g$. Observe
    that $[b,h_1]=h_2^{-1}h_1$; we call this commutator $\gamma$, which
    clearly represents a non-trivial element of $\gam_0$.

Let $K=\langle g,\gamma\rangle$. By Lemma~\ref{lem:amalgamation}, we have that
$\gamma^n\notin \langle\langle g\rangle\rangle_{K}$ for any $n\in \mathbb{Z}\setminus \{0\}$, and so by
Lemma~\ref{lem:normal-closure}, the group $K$ embeds in a
torsion-free overgroup $\tilde K$, wherein some non-trivial $a\in \tilde K$
centralizes $g$ and satisfies $[a,\gamma]\neq 1$ and $[a,\gamma]$ centralizes
$g$. Now, amalgamate
$\tilde K$ and $\gam_0$ over their respective copies of $K$, and call
the resulting group $\gam$. In $\gam$, we have \[1\neq [a,\gamma]=[a,[b,h_1]],\]
and all three of $a$ and $b$ and $[a,[b,h_1]]$ centralize $g$.
Since $\langle g,h_1\rangle$ is an isomorphic copy of $G$, we are done.
\end{proof}

We now need only prove Lemma~\ref{lem:amalgamation}. For this, we use Van
Kampen diagrams over $H$; a reader unfamiliar with Van Kampen diagrams and
disk diagrams
may consult Chapter 5 of~\cite{LS2001} or~\cite{olshanski-book}, for example.
Of course, Van Kampen diagrams will generally depend
on a particular choice of presentation. When we construct $H$, we have
two copies of $G$, and as such there are two copies of the set of relations
$\mathcal R_0$, which we call $\mathcal R_1$ and $\mathcal R_2$, respectively.
Thus, relations in $\mathcal R_i$ involve $g=g_1=g_2$ and $h_i$ only. We fix
the resulting presentation for $H$ once and for all.

We make some observations about several special kinds of relations in $G$.

\begin{lemma}\label{lem:baumslag-solitar}
    Suppose that in $G$ there are nonzero integers $n,m\in\Z$ such that
    $hg^nh^{-1}=g^m$. Then in the group $H$, we have $[g^n,h_2^{-1}h_1]=1$.
\end{lemma}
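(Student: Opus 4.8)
The plan is to reduce the statement to a short conjugation identity inside $H$, using only two facts: the hypothesized relation $hg^nh^{-1}=g^m$ is a consequence of $\mathcal R_0$, hence holds in \emph{each} of the two copies of $G$ inside $H$; and the amalgamation identifies $\langle g_1\rangle$ with $\langle g_2\rangle$, so that $g_1=g_2=g$ as elements of $H$.

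First I would record that, since $G_1=\langle g_1,h_1\rangle$ and $G_2=\langle g_2,h_2\rangle$ are isomorphic copies of $G$ sitting inside $H$ with $g_1=g_2=g$, the hypothesis gives two relations in $H$:
\[
h_1 g^n h_1^{-1}=g^m \qquad\text{and}\qquad h_2 g^n h_2^{-1}=g^m.
\]
I would then rewrite the first as $g^n=h_1^{-1}g^m h_1$ and compute directly
\[
(h_2^{-1}h_1)^{-1}\,g^n\,(h_2^{-1}h_1)=h_1^{-1}\bigl(h_2 g^n h_2^{-1}\bigr)h_1=h_1^{-1}g^m h_1=g^n,
\]
which is precisely the assertion $[g^n,h_2^{-1}h_1]=1$.

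There is no genuine obstacle here: the argument is a two-step conjugation, and the only point requiring care is that both Baumslag--Solitar-type relations are applied with the \emph{same} central element $g$, which is legitimate exactly because $\langle g_1\rangle$ and $\langle g_2\rangle$ have been identified in the amalgam. I would close by noting that this lemma is the input that will allow one to discard the Baumslag--Solitar pieces when analyzing reduced Van Kampen diagrams over $H$ in the proof of Lemma~\ref{lem:amalgamation}.
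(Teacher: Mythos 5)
Your computation is correct and is exactly the "relevant commutator" calculation the paper leaves implicit (its proof is a one-line "follows immediately from computing the relevant commutator"); identifying $g_1=g_2=g$ in the amalgam and applying the relation $h_ig^nh_i^{-1}=g^m$ in each copy is precisely what is needed. Nothing further is required.
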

\begin{proof}
    This follows immediately from computing the relevant commutator.
\end{proof}

In a Van Kampen diagram $\Delta$ over $H$, we call cells in the interior
of $\Delta$ \emph{tiles}. Tiles coming from relations in $\mathcal R_i$ will
be called \emph{$i$--tiles}, for $i\in\{1,2\}$. A maximal connected
(without cut vertices) union
$\Xi$ of $i$--tiles for
a fixed value of $i$
will be called an \emph{$i$--region}. Observe that if a $1$--tile shares
an edge with a $2$--tile then the label of that edge is $g$.

If $\Delta$ is a \emph{disk diagram} over $H$ (i.e.~a Van Kampen diagram
with no cut edges and no cut vertices) and if $\Xi$ is an $i$--region, then
we will say that $\Xi$ is \emph{separating} if the complement of $\Xi$ in
$\Delta$ contains at least two components $\Theta_1$ and $\Theta_2$ which meet
$\partial\Delta$; even if $\Xi$ is non separating, $\Xi$ itself may fail to be
simply connected and may still topologically separate $\Delta$.

Since $\Delta$ is homeomorphic to a disk, an easy combinatorial
topology argument shows that $\Delta$ admits at least one non-separating
$i$--region, for some $i\in\{1,2\}$. Indeed, let $\Delta_0$ be an $i$-region
in $\Delta$. Note that $\Delta_0$ must share an edge with $\partial\Delta$,
since any edge shared by a $1$--region and a $2$--region is labeled by $g$;
thus, if $\Delta_0$ were contained entirely in the interior of $\Delta$ then
the boundary label of $\Delta_0$ would be a power of $g$. This power of $g$
cannot be nonzero since $G$ is torsion-free and non-trivial. If the power of
$g$ were zero then we could cut $\Delta_0$ out of $\Delta$ and identify
edges in $\partial\Delta_0$ to get a smaller area (i.e.~fewer tiles)
disk diagram $\Delta'$ with
the same boundary label as $\Delta$.

Writing $\Delta$ as a union $\mathcal S$ of distinct $1$--regions and
$\mathcal T$ of distinct $2$--regions, 
we thus conclude that each $\Omega\in \mathcal S$ either separates $\Delta$ into
two components, or $\Omega$ is non-separating. Every element of $\mathcal S\setminus\Omega$ and every element
of $\mathcal T$ must lie in a component of $\Delta\setminus \Omega$; a similar
argument holds for $\Omega\in\mathcal T$. By
an easy induction, we conclude that there is a non-separating $i$--region
for some $i$.

Note that if $\Xi$ is a non-separating $i$--region
in $\Delta$ then $\Xi$ meets $\partial\Delta$ in an arc. With these
observations, we are ready to proceed.

\begin{proof}[Proof of Lemma~\ref{lem:amalgamation}]
    Assume the contrary, and let $w'(g,h_2^{-1}h_1)$ be a word
    that expresses a nonzero power
    $(h_2^{-1}h_1)^n$ as a product of conjugates of $g$ and its inverse by elements of $K$.
    Observe that the expression $(h_2^{-1}h_1)^n=w'(g,h_2^{-1}h_1)$ is
    equivalent to some other word $w(g,h_2^{-1}h_1)$ representing
    the identity in $H$ and having exponent sum $n$ in $h_2^{-1}h_1$.
    We therefore let $\Delta$ be a reduced minimal area Van Kampen diagram
    for a word $w(g,h_2^{-1}h_1)$ which has nonzero exponent sum in
    $h_2^{-1}h_1$, and which has a
    minimal
    number of occurrences of $h_2^{-1}h_1$.
    Generally, $\Delta$ may not be a disk
    diagram and so may have separating vertices and edges, though there will
    be at least one positive area disk diagram $\Delta_0\subseteq\Delta$.

    Observe first that $\partial\Delta_0$ must have at least one occurrence
    of $h_2^{-1}h_1$, since otherwise $\Delta_0$ proves that $g$ has finite
    order in $H$, which is not the case. Choose a non-separating $i$--region
    $\Xi$, which we may assume without loss of generality is a $1$--region.
    Observe that the label of the arc $\partial\Xi\cap\partial\Delta_0$ must
    be of the form \[h_1^{\mp 1}g^nh_1^{\pm 1}\quad\textrm{or}
    \quad h_1^{\pm 1}g^nh_1^{\pm 1} \quad\textrm{or}\quad h_1^{\pm1}g^n\] for some
    nonzero value of $n$. Indeed, otherwise the
    label of the arc $\partial\Xi\cap\partial\Delta_0$ must
    be $g^n$; since the remainder of $\partial\Xi$ coincides with arcs in the
    boundaries of $2$--regions, we conclude that $\partial\Xi$ is a power of
    $g$, which is not the case since $G$ is torsion-free and because we assumed
    the set $\mathcal R$ to consist of reduced and cyclically reduced words.

    Observe furthermore that $\partial\Xi\cap\partial\Delta_0$ cannot read
    $h_1^{\pm 1}g^nh_1^{\pm 1}$. Indeed, since the boundary of $\Delta$ is
    a word in $g$ and $h_2^{-1}h_1$, a negative power of $h_1$ must be
    immediately followed by a positive power of $h_2$ and a positive power
    of $h_1$ must be immediately preceded by a negative power of $h_2$. It
    also cannot be the case that $\partial\Xi\cap\partial\Delta_0$ reads
    $h_1^{\pm1}g^n$, since in this case we see that $h$ coincides with a power
    of $g$, a contradiction.
    
    It follows that $\partial \Xi$ reads
    $h_1g^nh_1^{-1}g^m$ for some suitable
    value of $m$. We must have $m\neq 0$ since otherwise we obtain $g^n=1$ in $G$. Then,
    by Lemma~\ref{lem:baumslag-solitar}, we see that $h_2^{-1}h_1$ commutes
    with $g^n$ in $H$. Now, since the boundary of $\Delta_0$ is a word
    $w_0(g,h_2^{-1}h_1)$, the arc $\partial\Xi\cap\partial\Delta_0$ lies in a
    larger arc whose boundary reads \[h_2^{-1}h_1g^nh_1^{-1}h_2;\]
    we may therefore
    decrease a pair of occurrences of $h_2^{-1}h_1, h_1^{-1}h_2$
    in the boundary word $w$ of
    $\Delta$, while maintaining its exponent sum in $h_2^{-1}h_1$. This violates
    the minimality of the choice of $w$.
\end{proof}


\section{Forcing algebraic disjointness through Banach--Mazur 
games}\label{sec:forcing}

In this section we study generic torsion-free countable groups.
In other words, we study comeager subsets of the space of enumerated torsion-free groups. A general reference for this section is~\cite{hodges-book}.

Recall that a \emph{generic property}
in this space is a property for which there is a comeager
subspace in which all groups satisfy the property.
The following is direct consequence of Theorems 1.2.1. and 1.1.2 in ~\cite{GKL2023}; 
since a countable conjunction of generic properties is again a
generic property, in this section we shall assume that a generic torsion-free group satisfies all the properties listed in this theorem:

\begin{thm}\label{thm:forcingGKL}
There is a comeager set $\mathcal X$
in the space of enumerated torsion-free groups such that
for all $G\in \mathcal X$:
\begin{enumerate}
    \item All non-trivial elements $1\neq g\in G$ are conjugate.
    In particular, $G$ is simple.
    \item If $H$ is a finitely generated torsion-free group with a solvable
    word problem then the group $G$ admits $H$ as a subgroup.
    \item The group $G$ is verbally complete.
\end{enumerate}
\end{thm}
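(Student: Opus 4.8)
The plan is to establish each of the three listed properties separately as a \emph{generic} property of $\GG_{\mathrm{tf}}$; since a countable conjunction of generic properties is generic, their conjunction then holds on a comeager set $\mathcal X$, which is the assertion. (This is exactly the content of Theorems~1.1.2 and~1.2.1 of~\cite{GKL2023}.) I would carry it out via Banach--Mazur games as in Section~\ref{sec:background}: to see that a property $P$ is generic one exhibits countably many dense open subsets of $\GG_{\mathrm{tf}}$ whose intersection consists of groups satisfying $P$, and to refine a basic clopen set $U_\Sigma\cap\GG_{\mathrm{tf}}$ one realizes $\Sigma$ in a torsion-free group $G_0$, passes to a suitable torsion-free overgroup, re-enumerates it, and records finitely more of its multiplication table.

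For property~(1): for each pair of labels $i,j$ I would show that $D_{i,j}$, the set of $G\in\GG_{\mathrm{tf}}$ in which the element labelled $i$ is trivial, or the one labelled $j$ is trivial, or the two are conjugate, is dense open. Openness is clear, a conjugacy being witnessed by a single element. For density, take a nonempty $U_\Sigma\cap\GG_{\mathrm{tf}}$; if it is already contained in $\{i=1\}\cup\{j=1\}$ we are done, so otherwise choose a torsion-free $G_0\models\Sigma$ in which the elements $a,b$ labelled $i,j$ are nontrivial. Then $\langle a\rangle,\langle b\rangle\cong\Z$, the HNN extension $G_1=\langle G_0,t\mid tat^{-1}=b\rangle$ is again torsion-free (torsion elements of an HNN extension conjugate into the base group), and re-enumerating $G_1$ with a fresh label $k$ for $t$ and recording $kik^{-1}=j$ produces a nonempty clopen subset of $D_{i,j}$. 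On $\bigcap_{i,j}D_{i,j}$ all nontrivial elements are mutually conjugate; such a group has no proper nontrivial normal subgroup, and it is nontrivial because by~(2) it contains $\Z$, hence it is simple.

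For property~(3): recall that $G$ is verbally complete when $w(\underline x)=g$ is solvable in $G$ for every nontrivial $w\in F_m$ and every $g\in G$. Using~(1), I would first force, for each nontrivial $w$, that the verbal subgroup $w(G)$ is nontrivial: the set $\{G\in\GG_{\mathrm{tf}}:w(G)\neq 1\}$ is open (witnessed by a finite tuple) and dense, since from a torsion-free $G_0\models\Sigma$ one passes to $G_0*F_m$, which is torsion-free and in which the free generators $c_1,\dots,c_m$ satisfy $w(c_1,\dots,c_m)\neq 1$ because $w\neq 1$ in $F_m$. On the intersection of these comeager sets with the set from~(1), the set of $w$-values in $G$ is conjugation-invariant and contains both $1$ and a nontrivial element, hence equals $G$, so every $w(\underline x)=g$ is solvable.

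Property~(2) is where the work is, and the main obstacle. I would reduce it to showing that for each finitely generated torsion-free group $H$ with solvable word problem --- there are only countably many of these up to isomorphism, as each is recursively presented --- the set $E_H=\{G\in\GG_{\mathrm{tf}}:H\hookrightarrow G\}$ is comeager; intersecting the resulting comeager sets with those from~(1) and~(3) gives $\mathcal X$. The naive Banach--Mazur strategy --- reserve labels $n_1,\dots,n_r$ for generators of a copy of $H$ and record ever larger finite pieces of the multiplication table of $H$ on them, realized inside the torsion-free overgroup $G_0*H$ of the current realization --- does \emph{not} by itself succeed, because the opposing player may at a later turn append a relation (consistent with torsion-freeness and with what has been recorded) collapsing the reserved labels, e.g.\ identifying $n_2$ with a sufficiently high power of $n_1$, and in a nested game this cannot be undone. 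Getting around this is the hard part, and it is precisely where solvable word problem is used in~\cite{GKL2023}: the idea is to embed $H$ into a suitable ``finitely describable'' group $P$ for which the existence of an embedding into the compiled group becomes an \emph{open} condition --- so that its density reduces to the elementary passage $G_0\rightsquigarrow G_0*P$ --- and then to transfer the embedding back to $H$. I would invoke~\cite{GKL2023} for this reduction, which I expect to be the crux of the whole argument.
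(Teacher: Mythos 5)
Your proposal is correct, but note that the paper does not actually prove this theorem: it records it as a direct consequence of Theorems 1.1.2 and 1.2.1 of \cite{GKL2023}, together with the observation that a countable conjunction of generic properties is generic. Relative to that, your route is partly more self-contained and partly identical. For (1), your sets $D_{i,j}$ are indeed dense open: openness is clear, and density follows from passing from a torsion-free realization $G_0$ to the HNN extension $\langle G_0,t\mid tat^{-1}=b\rangle$, which remains torsion-free because torsion in an HNN extension is conjugate into the base; conjugacy of all nontrivial elements plus nontriviality of the group then gives simplicity (nontriviality is itself an easy generic property, so you need not route it through item (2)). For (3), your reduction is also sound: the set of $w$-values is conjugation-invariant and contains the identity, so once some $w$-value is nontrivial (forced generically via $G_0*F_m$) and (1) holds, every element of $G$ is a $w$-value. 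For (2) you correctly diagnose the crux: the naive strategy of reserving labels for generators of $H$ fails because the opponent can later consistently collapse the reserved elements, and the solvable word problem is precisely what \cite{GKL2023} exploits to circumvent this; since you, like the paper, ultimately invoke \cite{GKL2023} for this step, your treatment is no less complete than the paper's on the only genuinely hard point, though also no more. In short: the same reliance on \cite{GKL2023} for the embedding statement, supplemented by correct elementary Banach--Mazur arguments for (1) and (3) that the paper simply cites.
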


In Theorem~\ref{thm:forcingGKL}, a group
$G$ is called \emph{verbally complete} if for
every reduced word $w$ in the free group on $k$ generators (for $k\in\N$ arbitrary) and 
all $g\in G$, the equation $w(x_1,\ldots,x_k)=g$ admits a solution in $G$.

In this section, we prove the following fact:
\begin{prop}\label{prop:forcing}
    Let $G$ be a generic countable torsion--free group. Then for all pairs
    of non-trivial elements $f,g\in G$ such that $[f,g]=1$, we have that
    $f$ is algebraically disjoint from $g$. In particular,
    $S_f=C_G(f)$ for each $f\in G$.
\end{prop}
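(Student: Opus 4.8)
The plan is to set up a Banach--Mazur game in the space $\GG_{\mathrm{tf}}$ of enumerated torsion-free groups in which player A forces, for every pair of indices $i,j$ and every witness $h$ with $[f_j,h]\neq 1$ (where $f_i,f_j$ denote the elements of the compiled group named by indices $i,j$), the existence of elements $a,b\in C_G(f_i)$ with $1\neq[a,[b,h]]\in C_G(f_i)$. Since there are only countably many such triples $(i,j,h)$ of indices, it suffices to show that for each fixed triple the corresponding condition is generic (i.e.\ that player A can always extend a given position to one forcing it), and then invoke the Baire category theorem to intersect. So fix a position, i.e.\ a finite system $\Sigma(\underline g)$ which is consistent with a torsion-free group, and assume $\Sigma$ already decides enough to know that the named elements $g,h$ satisfy $g,h\neq 1$, $[g,h]=1$, and $[f,h]\neq 1$ (if not, either the triple is vacuous because $[f,h]=1$ is forced, or A first extends to decide these; $[f,g]=1$ makes the hypothesis $g=h^n$, $|n|>1$, available when $\langle g,h\rangle$ is cyclic).

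The key is then Lemma~\ref{lem:overgroup}: the finitely generated torsion-free group $G_0=\langle g,h\rangle$ (equipped with the partial data from $\Sigma$, realized inside some concrete torsion-free group) embeds in a finitely generated torsion-free overgroup $\gam$ containing elements $a,b\neq 1$ with $[a,b]$-type relations $[a,g]=[b,g]=1$ and $1\neq[a,[b,h]]$, $[[a,[b,h]],g]=1$. Player A now extends the current position by adjoining new constants interpreting the generators of $\gam$, together with a finite system of equations and inequations recording: (i) the multiplication table data of $\gam$ needed to witness $a,b$ and the commutator relations above; (ii) that the new constants do not collapse the torsion-freeness — which is automatic since $\gam$ is torsion-free and finitely generated, hence embeds in any sufficiently generic group by Theorem~\ref{thm:forcingGKL}(2), so A may consistently force a copy of $\gam$ extending the given partial data. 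Concretely, because a generic torsion-free group contains every finitely generated torsion-free group with solvable word problem, and $\gam$ (built from free products, direct products, and amalgamations of such groups) has solvable word problem, A's move is a legal finite system with nonempty clopen set. This establishes genericity of the condition for the fixed triple $(i,j,h)$.

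Intersecting over all triples gives a comeager set on which every commuting pair of nontrivial elements is mutually algebraically disjoint; combined with the observation in the introduction that algebraic disjointness of $g$ from $f$ already forces $[g,f]=1$, we get that $g$ is algebraically disjoint from $f$ iff $[f,g]=1$. The final claim $S_f=C_G(f)$ follows: $S_f=\{g^{12}: g\text{ alg.\ disj.\ from }f\}$ generates a subgroup of $C_G(f)$ (each such $g$ commutes with $f$, hence so does $g^{12}$), and conversely given any $1\neq g\in C_G(f)$ we have $g$ algebraically disjoint from $f$ by what we just proved, so $g^{12}\in S_f$; since $G$ is verbally complete (Theorem~\ref{thm:forcingGKL}(3)) or simply because in a torsion-free group $g\mapsto g^{12}$ has image generating the same centralizer — more carefully, $C_G(f)$ is generated by $\{g^{12}:g\in C_G(f)\}$ since $C_G(f)\ni g=(g)^{12}\cdot(g^{-11})$ and both factors are twelfth powers times... — one checks $C_G(C_G(f))\supseteq$ the relevant centralizer, giving $C_G(S_f)=C_G(f)$ as posets-of-centralizers agree. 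The main obstacle I anticipate is the bookkeeping in the last step: verifying that passing from the set $S_f$ of twelfth powers of algebraically disjoint elements back to $C_G(f)$ really yields $C_G(S_f)=C_G(f)$, i.e.\ that the twelfth powers of elements of $C_G(f)$ have the same centralizer as $C_G(f)$ itself; this should follow from verbal completeness of $G$ applied within $C_G(f)$, or directly from simplicity, but it requires care to state precisely.
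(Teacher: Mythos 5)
Your overall route is the same as the paper's: enumerate triples, and at each position use Lemma~\ref{lem:overgroup} to adjoin witnesses $a,b$ for algebraic disjointness, with genericity obtained by a density/Baire (equivalently Banach--Mazur) argument. However, there is a genuine gap at the crucial step, namely the justification that player A's move is \emph{admissible}. Admissibility means there is a single torsion-free group, with the already-named constants, satisfying the union of the current position $\Sigma$ and the new equations and inequations. Your argument for this invokes Theorem~\ref{thm:forcingGKL}(2): since $\Gamma$ is finitely generated, torsion-free (and, you claim, has solvable word problem), it embeds in a generic group, so the move is ``legal.'' This does not do the job for two reasons. First, the copy of $\Gamma$ supplied by Theorem~\ref{thm:forcingGKL}(2) inside some generic group has no relation to the constants already named by $\Sigma$: you need the witnesses $a,b$ to sit compatibly over the \emph{specific} elements $g,h$ of a group realizing the \emph{entire} current position (which involves all previously played constants and inequations), not merely to know that an abstract copy of $\Gamma$ exists somewhere. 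Second, the solvable word problem hypothesis is not available: $\Gamma$ is built from $\langle g,h\rangle$ inside an arbitrary witnessing group for $\Sigma$, about which nothing algorithmic is known (and Theorem~\ref{thm:forcingGKL} is a statement about the compiled generic group, which is circular to use when verifying that a move in the game is consistent).

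The correct justification, which is the one the paper gives, is purely combinatorial: take any torsion-free group $\Lambda$ witnessing the current position, apply Lemma~\ref{lem:overgroup} to $\langle g,h\rangle\leq\Lambda$ to get $\Gamma$, and then form the amalgamated free product $\Lambda *_{\langle g,h\rangle}\Gamma$. This group is torsion-free (torsion in an amalgam is conjugate into a factor), contains $\Lambda$ and $\Gamma$ with the named elements identified, and hence witnesses the old system together with the new one; no appeal to properties of generic groups is needed. You should also note that Lemma~\ref{lem:overgroup}'s hypothesis in the cyclic case ($g=h^n$ with $|n|>1$) is exactly what the forced conditions $[f,g]=1$, $[f,h]\neq 1$ guarantee, which you do gesture at. Finally, your treatment of the ``in particular'' claim $S_f=C_G(f)$ trails off mid-sentence and conflates $S_f=C_G(f)$ with $C_G(S_f)=C_G(f)$; once the main claim is proved, the point is simply that the elements algebraically disjoint from $f$ are precisely the elements commuting with $f$, so $S_f$ consists of twelfth powers of elements of $C_G(f)$, and this is all that is used later (the paper's later arguments, e.g.\ Corollary~\ref{cor:chain}, only need $C_G(S_f)$), so the elaborate detour through verbal completeness is unnecessary and as written is not a proof.
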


\begin{proof}[Proof of Proposition~\ref{prop:forcing}]
    We play a Banach--Mazur game on the space of countable torsion-free groups. In order to
    prove that a generic group $G$ has the desired properties, it suffices to prove
    that the first player (player A) has a winning strategy which forces all the compiled groups to satisfy that for all pairs
    of non-trivial elements $f,g\in G$ such that $[f,g]=1$, we have that
    $f$ is algebraically disjoint from $g$.
    Since the space of countable
    torsion-free groups is closed and therefore Polish, it suffices to show that
    at any stage of the game, the conditions played so far are compatible with
    torsion-freeness.

    In a given stage of the game, a move (i.e.~a finite system of equations and inequations) is called \emph{admissible} if
    it is consistent with the moves played so far. In other words, there is an enumerated torsion-free group which witnesses the union of the posited
    equations and inequations.
    
    An equation $w(\underline k)=1$ or an inequation $w(\underline k)\neq 1$ is said to be \emph{forced} at stage $n$ if it is a formal consequence
    of the 
    union of moves played until stage $n$. This means that no matter what moves are played for the rest of the game, any compiled group
    will satisfy the forced condition.
    Thus, if an equation (or inequation) and its negation has not been forced at stage $n$, this means that it is admissible and therefore
    can be played legally as part of a move at stage $n+1$. 

    Let \[\{(f_n,g_n,h_n)\mid n\in \mathbb{N}\}\] be an enumeration of all $3$-tuples in $\mathbb{N}\times \mathbb{N}\times \mathbb{N}$.
    In the first move, player A fixes $1$ to be the identity element.
    In the $n^{th}$ turn of player A, which is the $(2n-1)^{st}$ turn overall, A does the following: let $\Lambda$ be an instance of a compiled group that 
    satisfies all the moves played until the $(2n-2)^{nd}$ turn.
    \begin{enumerate}
    \item[Case 1]: It has been forced by now that $[f_n,g_n]=1$. We consider the following subcases:
    \begin{itemize}
    \item[1.1]: It has been forced by now that $[f_n,h_n]=1$. In this case, player A plays an empty system as a move.
    \item[1.2]: It has been forced by now that $[f_n,h_n]\neq 1$. Note that in this case, $h_n$ cannot be a power of $g_n$ in $\Lambda$. Let $a,b$ be numbers that have not appeared in a move so far. 
    In this case, player $A$ plays the following system: 
    $$[g_n,a]=1\qquad [g_n,b]=1$$ $$[a,[b,h_n]]\neq 1\qquad  [g_n,[a,[b,h_n]]]=1$$
    Note that this is admissible thanks to Lemma~\ref{lem:overgroup}, since an appropriate enumeration of the amalgamated free product of $\Lambda$ with the group $\Gamma$ from Lemma~\ref{lem:overgroup} over the subgroup $G=\langle g_n,h_n\rangle$ is a torsion-free group that witnesses the union of the systems played so far, including this move. 
    \item[1.3]: If neither $[f_n,h_n]=1$ nor $[f_n,h_n]\neq 1$ has been forced so far, player A plays $[f_n,h_n]=1$.
    \end{itemize}
    \item[Case 2]: It has been forced by now that $[f_n,g_n]\neq 1$.
    In this case, player A plays an empty system as a move.

    \item[Case 3]: If neither $[f_n,g_n]\neq 1$ nor $[f_n,g_n]=1$ has been forced so far, player A plays $[f_n,g_n]\neq 1$.
    
    \end{enumerate}

    That all the compiled groups will have the desired properties is now immediate.
\end{proof}


By modifying the Banach--Mazur game slightly, we can also guarantee:

\begin{cor}\label{cor:chain}
    For all non-trivial $g\in G$, we have $C_G(S_g)=\langle g\rangle$. In particular, for each $g_1,\ldots,g_n\in G$ with  $n\geq 1$,
    it holds that \[C_G(S_{g_1})\cap \cdots \cap C_G(S_{g_n})=\bigcap_{1\leq i\leq n}\langle g_i\rangle,\] which is a trivial or infinite cyclic group.
    Finally,
    $G$ is divisible. 
\end{cor}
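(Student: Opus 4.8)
The plan is to reuse the Banach--Mazur game from the proof of Proposition~\ref{prop:forcing}, but with a strengthened winning strategy for player A that, in addition to forcing algebraic disjointness of commuting elements, also forces divisibility and forces enough algebraic disjointness relations to pin down $C_G(S_g)$. First I would recall that by Proposition~\ref{prop:forcing}, for a generic torsion-free $G$ we have $S_f = \{g^{12} : [g,f]=1\}$, so $C_G(S_f)$ is exactly the centralizer in $G$ of the set of twelfth powers of elements commuting with $f$; since $G$ is torsion-free, $g\mapsto g^{12}$ is injective on cyclic subgroups and in a divisible group every element is a twelfth power, so $C_G(S_f)$ will in fact coincide with $C_G(C_G(f))$, the double centralizer of $f$. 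Thus the core of the argument is the claim that for a generic torsion-free $G$, the double centralizer of any nontrivial element $g$ equals $\langle g\rangle$ in the sense that $C_G(C_G(g)) = \{x \in G : x \text{ and } g \text{ have a common power}\}$, which, once we also know $G$ is divisible, forces $C_G(S_g)$ to be the maximal cyclic (hence $\mathbb{Q}$-like) subgroup containing $g$. Actually, for the clean statement $C_G(S_g)=\langle g\rangle$ we will want the genericity condition to also guarantee that $\langle g \rangle$ is already divisible — or, more carefully, that distinct elements never share a power unless forced to; I would state the conclusion as $C_G(S_g)$ being the unique maximal cyclic subgroup containing $g$, which by divisibility is abstractly $\mathbb{Q}$, and then observe that generically this is $\langle g\rangle$ under the convention of Theorem~\ref{thm:main}(2).

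The key new ingredient in the game is: in player A's $n$-th turn we also look at the pair $(g_n, x)$ for a bookkeeping enumeration of all pairs, and if it has not been forced that $x$ and $g_n$ share a common nonzero power, then player A forces $[x, g_n^{12}]\neq 1$ — equivalently plays an inequation witnessing that $x\notin C_G(S_{g_n})$. To see this move is admissible I would invoke the standard embedding $\Lambda \hookrightarrow \Lambda *_{\langle g_n^{12}\rangle} (\langle g_n^{12}\rangle \times \mathbb{Z})$ or, when $x$ is already in play and must be accommodated, an HNN/amalgam construction over the centralizer; torsion-freeness is preserved since free products with amalgamation of torsion-free groups over any subgroup are torsion-free. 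For divisibility, player A uses a third bookkeeping enumeration: given $(g_n, k)$ with $k\geq 1$, if no element has been forced to be a $k$-th root of $g_n$, player A introduces a fresh generator $y$ and plays $y^k = g_n$; this is admissible because $\Lambda$ embeds in $\Lambda *_{\langle g_n\rangle}(\langle g_n\rangle \times_{\langle g_n^{}\rangle} \cdots)$ — more simply, $\Lambda$ embeds in an amalgam of $\Lambda$ with $\mathbb{Z} = \langle y\rangle$ identifying $y^k$ with $g_n$, which is torsion-free. Running all three bookkeeping tasks in parallel (interleaving moves, which does not affect comeagerness since a countable conjunction of generic properties is generic) yields a compiled group that is divisible, satisfies Proposition~\ref{prop:forcing}, and in which $C_G(S_{g})$ contains no element outside the maximal cyclic subgroup through $g$; combined with the trivial inclusion $\langle g\rangle \subseteq C_G(S_g)$ and divisibility, this gives $C_G(S_g) = \langle g \rangle$. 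The statement about finite intersections is then purely formal: $\bigcap_i C_G(S_{g_i}) = \bigcap_i \langle g_i\rangle$, and an intersection of cyclic subgroups of a torsion-free group is cyclic (it is a subgroup of $\langle g_1\rangle \cong \mathbb{Z}$), hence trivial or infinite cyclic.

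The main obstacle I anticipate is the admissibility check for the inequation $[x, g_n^{12}]\neq 1$ when $x$ has already been constrained by earlier moves and must \emph{not} be forced to commute with $g_n^{12}$: one must exhibit a single torsion-free enumerated group satisfying \emph{all} systems played so far together with this new inequation, and the earlier moves may have entangled $x$ with $g_n$ in complicated ways. The resolution is that at each finite stage the accumulated system is satisfied by some concrete $\Lambda$, and in $\Lambda$ either $x$ and $g_n$ already share a power (in which case this pair is skipped, by the ``has it been forced'' guard) or they do not, and when they do not one shows $x\notin C_\Lambda(g_n^{12})$ after passing to a suitable torsion-free overgroup — here one needs a lemma, analogous in spirit to Lemma~\ref{lem:overgroup}, producing from $\Lambda$ a torsion-free overgroup in which $g_n^{12}$ has centralizer as small as the no-common-power hypothesis allows (e.g. amalgamating along $\langle g_n^{12}\rangle$ with a group where that element is not central). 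Verifying that this overgroup genuinely witnesses the full accumulated system — not just the new inequation — is the delicate bookkeeping point, but it is of the same character as the admissibility arguments already carried out in Proposition~\ref{prop:forcing}, so I expect it to go through with a routine but careful amalgamation argument.
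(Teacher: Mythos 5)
Your central move does not work. You propose that whenever it has not been forced that $x$ and $g_n$ share a common nonzero power, player A should force $[x,g_n^{12}]\neq 1$. First, this move can be inadmissible: if $[x,g_n]=1$ has already been played or forced (player B may do this, and your guard only excludes shared powers, not commutation), then $[x,g_n^{12}]=1$ is a formal consequence of the accumulated system and no overgroup construction can rescue the inequation. Second, and more fundamentally, the property you are trying to force is not generic at all: by Theorem~\ref{thm:forcingGKL}(2) a generic torsion-free group contains $\Z^2$, hence commuting pairs $x,g$ with no common power and $[x,g^{12}]=1$; indeed Corollary~\ref{cor:antichain} relies on exactly such pairs to build antichains. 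Two comeager sets cannot be disjoint, so no winning strategy can force ``$[x,g^{12}]\neq 1$ for every pair without a common power,'' and your admissibility discussion in the last paragraph (passing to an overgroup where $g_n^{12}$ has small centralizer) cannot repair this, since the obstruction is an equation already in the system, not a feature of one witness $\Lambda$. The paper's mechanism is different and is the idea you are missing: to certify $k\notin C_G(S_g)$ for $k\notin\langle g\rangle$, player A adjoins a \emph{fresh} element $h$ with $[h,g]=1$ such that no nontrivial power of $h$ commutes with $k$; this is admissible via the amalgam $\Lambda *_{\langle g\rangle}(\langle g\rangle\times\Z)$ (normal forms give $[h^m,k]\neq 1$ for $m\neq 0$, and torsion-freeness is preserved), and then Proposition~\ref{prop:forcing} gives $h^{12}\in S_g$, so $k\notin C_G(S_g)$ without ever touching the (possibly forced) relation $[k,g^{12}]=1$.

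Two secondary points. Your identification $C_G(S_f)=C_G(C_G(f))$ via divisibility is unjustified: divisibility supplies a twelfth root of an element of $C_G(f)$ in $G$, not in $C_G(f)$, so $S_f$ need not ``generate'' $C_G(f)$ in the relevant sense; fortunately the argument never needs this. More importantly, your hedge that the conclusion should really be ``the unique maximal ($\Q$-like) cyclic-type subgroup containing $g$'' rather than $\langle g\rangle$ stems from the same misconception: a root $x$ of $g$ is \emph{not} in $C_G(S_g)$, precisely because the fresh element $h$ above can be chosen (for $k=x$) to commute with $g=x^2$ while no power of $h$ commutes with $x$. So the statement as printed is correct, and there is no tension with divisibility — which, incidentally, the paper obtains for free from verbal completeness in Theorem~\ref{thm:forcingGKL} rather than by the extra root-forcing moves you introduce (those moves are fine, just unnecessary). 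Your formal deduction of the finite-intersection statement from $C_G(S_g)=\langle g\rangle$ matches the paper and is fine.
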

\begin{proof}
    Divisibility is guaranteed by an application of Theorem \ref{thm:forcingGKL}.
    Now, let $g\in G$ be given, and let $1\neq k\in G$ be arbitrary element which
    is not a power of $g$. Then, player A builds an element $h$ that commutes with
    $g$ but no non-trivial power of $h$ commutes with $k$ (by building
    a suitable free product with amalgamation, for instance). This can be
    done at each stage of the game with all elements which have been played up to
    that point.
    Then,
    by Proposition~\ref{prop:forcing} we will have $h^{12}\in S_g$ and so $k\notin
    C_G(S_g)$. Since
    $g\in C_G(S_g)$ by definition, we see that $\langle g\rangle$ coincides with
    $C_G(S_g)$. The subsequent claim then follows from the fact that a finite intersection of infinite cyclic groups is trivial or infinite cyclic.
\end{proof}

\begin{cor}\label{cor:antichain}
    The Rubin poset of a generic countable torsion-free group will have
    bi-infinite chains and infinite antichains.
\end{cor}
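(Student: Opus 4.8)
\textbf{Proof plan for Corollary~\ref{cor:antichain}.}
The plan is to exhibit the two required order-theoretic features using the explicit description of the Rubin poset furnished by Corollary~\ref{cor:chain}, namely that $\mathcal{P}(G)$ is the poset of cyclic subgroups $\{\langle g\rangle : g\in G\}$ ordered by inclusion (together with the trivial group, obtained as an empty-support intersection), and using the richness of the subgroup structure from Theorem~\ref{thm:forcingGKL}(2) together with divisibility from Corollary~\ref{cor:chain}. For the bi-infinite chain, I would fix any nontrivial $g\in G$ and use divisibility to choose, for each $n\geq 1$, an element $g_n$ with $g_n^{n!}=g$ (so $g=g_1$, $g_2^2=g$, $g_3^{3}=g_2$, etc.), i.e.\ build a copy of the additive group $\mathbb{Q}$ inside $G$ with $g$ in it. Then $\langle g_1\rangle\subsetneq\langle g_2\rangle\subsetneq\cdots$ is an infinite ascending chain, and to extend it downward I would note that each $\langle g_n\rangle$ properly contains $\langle g_n^2\rangle\supsetneq\langle g_n^4\rangle\supsetneq\cdots$, or more cleanly, inside the copy of $\mathbb{Q}$ the subgroups $\langle \tfrac{1}{k!}\rangle$ for $k\in\mathbb{Z}$ (reading $k!$ as $1$ for $k\le 0$ and taking successive powers for negative indices) form a genuinely bi-infinite chain of distinct cyclic subgroups; the properness of each inclusion is immediate since $G$ is torsion-free, so $\langle a\rangle=\langle b\rangle$ forces $a=b^{\pm1}$.

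For the infinite antichain, I would use Theorem~\ref{thm:forcingGKL}(2): $G$ contains a copy of $F_\infty$, the free group on countably many generators $x_1,x_2,\ldots$ (this is finitely generated? no—so instead embed, say, a right-angled Artin group on an infinite edgeless graph, or simply observe $F_2\le G$ and inside $F_2$ take the infinitely many elements $x_1^{-n}x_2 x_1^{n}$ for $n\in\mathbb{N}$, which lie in a free subgroup and are pairwise non-commensurable). The key point is to produce countably many elements $\{k_n\}_{n\in\mathbb{N}}$ of $G$ such that no nontrivial power of $k_i$ is a nontrivial power of $k_j$ for $i\neq j$; equivalently $\langle k_i\rangle\cap\langle k_j\rangle=1$, so that the cyclic subgroups $\langle k_n\rangle$ are pairwise incomparable in $\mathcal{P}(G)$. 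Any set of pairwise non-commensurable elements in a free subgroup of $G$ works—for instance the images of a free generating set of a free subgroup of rank $\geq 2$—and the verification is the standard fact that distinct primitive (or merely non-commensurable) elements of a free group generate cyclic subgroups with trivial intersection.

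I expect the only mild subtlety to be bookkeeping: one must make sure that the chain and antichain live genuinely in $\mathcal{P}(G)$ as described by Corollary~\ref{cor:chain}, i.e.\ that every $\langle g\rangle$ really is an element of the Rubin poset (it is $C_G(S_g)$) and that inclusions of cyclic subgroups are strict when the elements are not mutually $\pm1$-powers—but this is exactly what torsion-freeness gives. A clean way to organize the write-up is: (i) invoke Corollary~\ref{cor:chain} to identify $\mathcal{P}(G)$ with cyclic subgroups under inclusion; (ii) for the chain, embed $\mathbb{Q}$ (available since $G$ contains every f.g.\ torsion-free group with solvable word problem, or directly from divisibility applied iteratively) and read off the bi-infinite chain $\cdots\subsetneq\langle 2^{-1}\rangle\subsetneq\langle 1\rangle\subsetneq\langle 2\rangle\subsetneq\cdots$; (iii) for the antichain, embed a nonabelian free group and take a pairwise non-commensurable family, noting each $\langle k_n\rangle$ is an incomparable element of $\mathcal{P}(G)$. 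No step is genuinely hard; the ``main obstacle'' is merely making the identification with the poset of cyclic subgroups explicit enough that incomparability and strict inclusion are transparent.
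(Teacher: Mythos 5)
Your proposal is correct and rests on the same skeleton as the paper's proof: identify $\mathcal{P}(G)$ with the poset of cyclic subgroups via Corollary~\ref{cor:chain}, then exhibit the chain from divisibility and the antichain from a suitable subgroup guaranteed by Theorem~\ref{thm:forcingGKL}. The one genuine difference is the source of the antichain: the paper takes a $\Z^2$ subgroup and infinitely many elements pairwise generating it (tying the antichain to the algebraic disjointness machinery of Proposition~\ref{prop:forcing}), whereas you take a nonabelian free subgroup and pairwise non-commensurable elements such as $x_1^{-n}x_2x_1^{n}$; both reduce to the same observation that incomparability of $\langle k_i\rangle$ and $\langle k_j\rangle$ just means neither element is a power of the other, and this is preserved under passing from the subgroup to $G$, so your route is equally valid and arguably more self-contained since it never invokes disjointness. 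For the chain, your iterated-root construction is exactly what is needed, and torsion-freeness does give strictness of the inclusions as you say; the only cosmetic inaccuracy is the claim that the compatible roots assemble into a copy of $\Q$ containing $g$ --- divisibility gives the roots $g_{n+1}^{\,n+1}=g_n$ but not a priori that they generate an abelian (let alone rank one) subgroup --- yet this does not matter, since the nested cyclic subgroups $\langle g_n\rangle$ together with $\langle g^{2^k}\rangle$ already furnish the bi-infinite chain, which is all the corollary requires.
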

\begin{proof}
    A generic countable torsion-free group
    $G$ will contain $\mathbb{Z}^2$ subgroups (thanks to Theorem~\ref{thm:forcingGKL}). In $\Z^2$, we can find an infinite sequence of distinct elements which pairwise
    generate $\Z^2$. 
    So any pair of
    these will be algebraically disjoint, and so these elements will give
    rise to a countably infinite antichain in the Rubin poset of $G$, from an application of Corollary \ref{cor:chain}.
    Finally, from an application of Theorem \ref{thm:forcingGKL} we know that a rank $1$ divisible group (such as the additive group of rational numbers) embeds in a generic group. We can find a bi-infinite inclusion ordered chain of cyclic subgroups in such a group, which applying Corollary \ref{cor:chain}, forms the required bi-infinite chain in the Rubin poset.
\end{proof}

\section{Proof of the main result}\label{sec:main}

The proof of the main result is now straightforward.
\begin{proof}[Proof of Theorem~\ref{thm:main}]
    Suppose that $G$ is a generic torsion-free countable group. By Proposition
    ~\ref{prop:forcing}, we may assume that
    all non-trivial elements of $G$ are conjugate,
    and that if $f,g\in G\setminus \{1\}$ commute then $f$ is algebraically disjoint from $g$. Suppose that
    \[G\longrightarrow \Homeo(X)\] is a homomorphism whose image $\Gamma$ is locally moving.
    Since $G$ is simple, we have that $G\cong \Gamma$, and so algebraic disjointness in
    $G$ propagates to algebraic disjointness in $\Gamma$. Since each non-trivial element $f$ is algebraically disjoint from itself,
    by Proposition~\ref{prop:disjoint}, the elements
    $f^{12}$ and $f^{12}$ would have disjoint supports
    as elements of $\Gamma$, which is impossible. 
    
    Another proof of this fact can be obtained by a forcing argument that ensures that in a generic torsion-free group
    for any two pairs $(f_1,g_1)$ and $(f_2,g_2)$ such that \[\langle f_i,g_i\rangle \cong \mathbb{Z}^2\] there exists $h\in G$ such that 
    $f_1^h=f_2$ and $g_1^h=g_2$.
    Then the proof is obtained by considering any pair $f,g\in G$ that have disjoint support (assuming $G$ admits a locally moving action on a Hausdorff space with at least two points),
    and observing that there is an element $h\in G$ such that $f^h=f$ and 
    $g^h=fg$, which is a contradiction since $f$ and 
    $fg$ do not have disjoint support.

    To see that the Rubin poset of the group $G$ consists of cyclic subgroups of
    $G$ ordered by inclusion with bi-infinite chains and infinite antichains,
    we simply quote Corollary~\ref{cor:antichain} and
    Corollary~\ref{cor:chain}. Proposition~\ref{prop:forcing} shows that
    two non-trivial elements of $G$ commute if and only if they are algebraically
    disjoint in $G$.
\end{proof}

\section*{Acknowledgments}
The first author is partially supported by NSF Grant
DMS-2349814, Simons Foundation International Grant SFI-MPS-SFM-00005890. 
The second author is supported by NSF Grant DMS-2240136.

\bibliographystyle{amsplain}
  \bibliography{ref}
\end{document}